\documentclass[12pt]{amsart}
\usepackage{amsmath,amsfonts,amssymb,amsxtra,amscd,enumerate,amsthm}
\usepackage{color}
\usepackage{latexsym}
\usepackage{epsfig}
 \usepackage{fullpage}

\newcommand\les{\lesssim}
\newcommand\ges{\gtrsim}

\newcommand\bj{\textbf{j}}
\newcommand{\bx}{\mathbf{x}}

\newcommand\R{\mathbb{R}}

\newcommand\C{\mathbb{C}}
\newcommand\Z{\mathbb{Z}}
\newcommand\N{\mathbb{N}}

\newcommand{\calQ}{\mathcal Q}
\newcommand{\calT}{\mathcal T}
\newcommand{\calE}{\mathcal E}
\newcommand{\calH}{\mathcal H}
\newcommand{\calC}{\mathcal C}

\newcommand{\calR}{\mathcal R}
\newcommand{\calN}{\mathcal N}

\newcommand{\ls}{{\lesssim}}

\newcommand\la{\langle}
\newcommand\ra{\rangle}
\newtheorem{theo}{Theorem}
\numberwithin{theo}{section} 
\newtheorem{lema}[theo]{Lemma}
\newtheorem{prop}[theo]{Proposition}
\newtheorem{corol}[theo]{Corollary}
\newtheorem{defin}[theo]{Definition}
\newtheorem{conjecture}[theo]{Conjecture}

\newtheorem{rem}{Remark}

\newtheorem{set}{Setup}

\numberwithin{equation}{section}

\begin{document}
\title{Optimal multilinear restriction estimates for a class of surfaces with curvature}

\author[I. Bejenaru]{Ioan Bejenaru} \address{Department
  of Mathematics, University of California, San Diego, La Jolla, CA
  92093-0112 USA} \email{ibejenaru@math.ucsd.edu}

\begin{abstract} In \cite{BeCaTa}, Bennett, Carbery and Tao consider the $k$-linear restriction estimate in $\R^{n+1}$ and establish the
near optimal $L^\frac2{k-1}$ estimate under transversality assumptions only.
In \cite{Be3} we have shown that the trilinear restriction estimate improves its range of exponents under some curvature assumptions.  
In this paper we establish almost sharp multilinear estimates for a class of hypersurfaces with curvature for $4 \leq k \leq n$. Together with previous results in the literature, this shows that curvature improves the range of exponents in the multilinear restriction estimate at all levels of lower multilinearity, that is when $k \leq n$.

\end{abstract}

\subjclass[2010]{42B15 (Primary);  42B25 (Secondary)}
\keywords{Multilinear restriction estimates, Shape operator, Wave packets}

\maketitle

\section{Introduction} \label{intr}

For $n \geq 1$, let $U \subset \R^{n}$ be an open, bounded and connected  neighborhood of the origin and let $\Sigma: U \rightarrow \R^{n+1}$ be a smooth parametrization of an $n$-dimensional submanifold of $\R^{n+1}$ (hypersurface), which we denote by $S=\Sigma(U)$. To this parametrization of $S$ we associate the operator $\calE$ defined 
by
\[
\calE f(x) = \int_U e^{i x \cdot \Sigma(\xi)} f(\xi) d\xi. 
\]
Given $k$ smooth, compact hypersurfaces $S_i \subset \R^{n+1}, i=1,..,k$, where $1 \leq k \leq n+1$, the $k$-linear restriction estimate is the following
inequality 
\begin{equation}  \label{MRE}
\| \Pi_{i=1}^k \calE_i f_i \|_{L^p(\R^{n+1})} \les \Pi_{i=1}^k \| f_i \|_{L^2(U_i)}.  
\end{equation}
In a more compact format this estimate is abbreviated as follows:
\[
\calR^*(2 \times ... \times 2 \rightarrow p).
\] 
The fundamental question regarding the above estimate is the value of the optimal $p$ for which it holds true. Given that the 
estimate $\calR^*(2 \times ... \times 2 \rightarrow \infty)$ is trivial, the optimality is translated into the smallest $p$ for which the
estimate holds true. In  \cite{BeCaTa} Bennett, Carbery and Tao clarified the role of transversality between the surfaces involved and established that,
under a transversality condition between $S_1,..,S_k$, the optimal exponent is $p=\frac2{k-1}$; the actual result in \cite{BeCaTa} is near-optimal, and the optimal problem is currently open. The optimality can be easily revealed by taking $S_i$ to be transversal hyperplanes, in which case the estimate becomes the classical Loomis-Whitney inequality.  

It is also known, in some cases (precisely when $k \leq 2$), or expected, in most of the others, that curvature assumptions improve the range of exponents in \eqref{MRE}, except for the case $k=n+1$. In \cite{Be3} we formalized the following
\begin{conjecture} \label{conjj}
Under appropriate transversality and curvature conditions on the surfaces $S_i$, $\calR^*(2 \times ... \times 2 \rightarrow p)$ 
holds true for any $p \geq p(k)=\frac{2(n+1+k)}{k(n+k-1)}$. 
\end{conjecture}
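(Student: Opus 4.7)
The plan is to prove the conjecture by a wave packet decomposition coupled with an induction-on-scales argument, leveraging the $k$-linear Bennett-Carbery-Tao estimate \cite{BeCaTa} as a black box and extending the curvature mechanism of \cite{Be3} from $k=3$ to general $k$. First I would reduce to bounding $\|\Pi_{i=1}^k \calE_i f_i\|_{L^{p(k)}(B_R)}$ with an acceptable $R^{C\e}$ loss, discretize each $f_i$ at frequency scale $R^{-1/2}$, and expand each $\calE_i f_i$ as a sum of wave packets, i.e.\ tubes of dimensions $R^{1/2} \times \cdots \times R^{1/2} \times R$ with direction normal to the associated point of $S_i$.

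Next I would introduce an intermediate cap decomposition of each $S_i$ at a scale $\rho$ with $R^{-1/2} \ll \rho \ll 1$. On a single $k$-tuple of caps $(\tau_1, \dots, \tau_k)$, the normals to the $k$ local pieces together with the $n+1-k$ principal curvature directions extracted via the shape operators effectively span $\R^{n+1}$; this creates a transversality configuration at scale $\rho$ that is richer than the bare $k$-linear one, and one can apply a rescaled BCT estimate locally. The gain from the additional curvature-induced directions is what drives the improvement from the BCT exponent $2/(k-1)$ down to $p(k)$.

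Summing over caps would be carried out using $L^2$ orthogonality between wave packets from distinct caps of the same surface, combined with a geometric packing estimate that controls how many caps can simultaneously contribute at a given spatial point. The intermediate scale $\rho$ is then optimized: the exponent $p(k) = 2(n+1+k)/(k(n+k-1))$ should emerge as the balance between the gain of the rescaled BCT estimate per cap and the combinatorial cost of summing. Induction on scales together with a bootstrap on the constant would absorb the $R^{C\e}$ loss for any $p > p(k)$, and interpolation with the trivial $L^\infty$ bound then delivers the full stated range.

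The main obstacle I anticipate is the precise quantification of the curvature-induced almost orthogonality at the cap scale: one has to show via the shape operators that wave packets from distinct caps of a single surface are orthogonal in a way that is stable under multiplication by the other $k-1$ factors, so that the orthogonality can be fed into an $L^{p(k)}$ estimate rather than just into an $L^2$ Plancherel argument. A secondary obstacle is the borderline regime $k=n$, where only one curvature direction is left to exploit; the induction on scales must be run with minimal slack, and the curvature hypothesis must be formulated so that the shape operators are non-degenerate precisely on that last direction. Note that $p(n+1)=2/n$ recovers the BCT exponent, consistent with the expectation that no curvature gain is available at the top level of multilinearity.
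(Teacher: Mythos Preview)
The statement you are attempting to prove is labeled a \emph{Conjecture} in the paper, and the paper does not claim to prove it in full generality. What the paper actually establishes is Theorem~\ref{mainT}, the special case of the conjecture for the class of $(k-1)$-conical hypersurfaces satisfying conditions i)--iii); for generic surfaces with curvature the conjecture remains open. There is therefore no ``paper's own proof'' of Conjecture~\ref{conjj} to compare against.

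As a proposed proof of the open conjecture, your sketch has a genuine gap precisely where you yourself flag the ``main obstacle.'' The step ``summing over caps using $L^2$ orthogonality between wave packets from distinct caps \ldots\ combined with a geometric packing estimate'' is not something anyone knows how to execute in $L^{p(k)}$ with $p(k)<1$: Plancherel orthogonality lives in $L^2$, and transporting it into an $L^{p(k)}$ estimate after multiplication by the other $k-1$ factors is exactly the difficulty the conjecture encodes. Your outline offers no mechanism for this beyond hope. The ``rescaled BCT on cap-tuples plus optimization of the intermediate scale $\rho$'' is the Bourgain--Guth scheme; run with BCT as a black box it yields improvements to the \emph{linear} restriction exponent, but it does not by itself push the $k$-linear exponent below $2/(k-1)$.

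For contrast, the paper's proof of the special case does not proceed via cap decomposition and scale optimization. It runs Tao's table-construction induction on scales and isolates two concrete analytic ingredients that replace your unproven ``curvature-induced almost orthogonality'': a superposition estimate for the localized $k$-linear bound that survives the failure of the triangle inequality in $L^{2/(k-1)}$ (Theorem~\ref{MB2}), and a new mixed multilinear estimate (Theorem~\ref{NewRE}) controlling $\bigl\| \|\calE_1 f_1\|_{L^2(S(q))}\Pi_{i=2}^k\|\calE_i f_i\|_{L^2(q)}\bigr\|_{l^{2/(k-1)}_q}$, where $S(q)$ is an $(n-k+1)$-dimensional translated cone of normals. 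These are what make the argument close for the conical class; your proposal contains nothing playing the analogous role.
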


The case $k=1$ has been understood for a very long time. Without any curvature assumptions, the optimal exponent is $p=\infty$; once the surface has some non-vanishing principal curvatures, the exponent improves to $p=\frac{2(l+2)}{l} $, where $l$ is the number of non-vanishing principal curvatures. The case of non-zero Gaussian curvature, corresponding to $l=n$, is the classical result due to Tomas-Stein, see \cite{St}.

The case $k=2$ without any curvature assumptions corresponds to the classical $L^2$ bilinear estimate, where the optimal estimate had been established. Once curvature curvature assumptions are allowed, the best possible exponent in 
$\calR^*(2 \times 2 \rightarrow p)$ is $p=\frac{n+3}{n+1}$ and it was conjectured in \cite{FoKl}. The problem was intensely studied, see
\cite{Bou-CM, Wo, Tao-BW, Tao-BP, TV-CM1, Lee-BR,LeeVa, Be2} and references therein. The problem is solved in the regime $p > \frac{n+3}{n+1}$
for general hypersurfaces with curvature; the end-point $p = \frac{n+3}{n+1}$ is solved only for the cones, see Tao \cite{Tao-BW}. 

The case $k=n+1$ is fairly well-understood. We note that in this case, additional curvature assumptions have no effect 
on the optimality of $p$. It is conjectured that if the hypersurfaces $S_i \subset \R^{n+1}$ are transversal, then \eqref{MRE} holds true for $p \geq p_0=\frac{2}n$. If $S_i$ are transversal  hyperplanes, \eqref{MRE}  is the classical Loomis-Whitney inequality and its proof is elementary.  Once the surfaces are allowed to have non-zero principal curvatures, things become far more complicated and the problem has been the subject of extensive research, see \cite{BeCaTa,Gu-main} and references therein. 
In \cite{BeCaTa}, Bennett, Carbery and Tao establish a near-optimal version of \eqref{MRE}: this is \eqref{MRE} 
with an additional $R^\epsilon$ factor when the estimate is made over balls of radius $R$ in $\R^{n+1}$. The optimal result for \eqref{MRE}, that is without the $\epsilon$-loss, is an open problem; in some cases one can use $\epsilon$-removal techniques to derive the result without the $\epsilon$-loss for $p > \frac2n$, see \cite{BoGu} for the case of surfaces with non-vanishing Gaussian curvature. The end-point for the multilinear Kakeya version of \eqref{MRE} (a slightly weaker statement than \eqref{MRE}) has been established by Guth in \cite{Gu-main} using tools from algebraic topology. 

In the remaining cases, $3 \leq k \leq n$,  the $k$-linear restriction theory has been addressed in \cite{BeCaTa} only under 
transversality assumptions and the authors established the near-optimal result for $p \geq \frac{2}{k-1}$. The exponent $\frac2{k-1}$ is sharp for generic surfaces, but it is not the optimal exponent once curvature assumptions are brought into the problem; indeed note that $p(k) < \frac2{k-1}$.  

In \cite{Be3} we looked at the trilinear estimate (corresponding to $k=3$) and proved the Conjecture \ref{conjj} in the regime $p > p(3)$ for a particular class of surfaces: the double-conic ones. These surfaces have the nice property that they have the exact "amount" of curvature to obtain the estimate with the optimal exponent $p(3)$, and no more, in the sense that they are "flat" in the unnecessary directions. 

In this paper we provide the equivalent result for $4 \leq k \leq  n$ for $k-1$-conical surfaces. We note that passing from the case $k=3$
to $k\geq 4$ requires not only additional technical ideas, but also conceptual ones. 

We describe bellow  the class of hypersurfaces for which we prove the Conjecture \ref{conjj}.  We start with the definition of a foliation. A $k-1$-dimensional foliation of the ($n$-dimensional) hypersurface $S$ is a decomposition of $S$ into a union of connected disjoint sets $\{ S_\alpha \}_{\alpha \in A}$, called the leaves of the foliation, with the following property: every point in $S$ has a  neighborhood $V$ and local system of coordinates $x: V \subset S \rightarrow \R^{n}$ such that for each leaf $S_{\alpha}$, the coordinates of $V \cap S_{\alpha}$ are $\xi_k=constant,..,\xi_{n}=constant$. 

We now formalize the conditions we impose on our surfaces. As before, $S_i, i \in \{1,..,k \}$ are hypersurfaces with smooth parameterizations $\Sigma_i: U_i \subset \R^n \rightarrow \R^{n+1}$, where each $U_i$ is open, bounded and connected  neighborhood of the origin (note that different $U_i$ may belong to different hyperplanes identified with the same $\R^n$). In addition, we assume the following three hypothesis:

i) (foliation) for each $i \in \{1,..,k\}$, the hypersurface $S_i$ admits the foliation
\[
S_i = \bigcup_{\alpha} S_{i,\alpha}
\]
where, for each $\alpha$, the leaf $S_{i,\alpha}$ is a flat submanifold of dimension $k-1$.  
 
ii) (the leaves are completely flat) If $S_{N_i(\zeta_i)}$ is the shape operator of $S_i$ at $\zeta_i \in S_i$ with choice of normal $N_i(\zeta_i)$, we assume that for every $v \in T_{\zeta_i} S_{i,\alpha}$ (the tangent plane at $S_{i,\alpha}$ at the point $\zeta_i \in S_{i,\alpha}$) the following holds
\[
S_{N_i(\zeta_i)} v =0. 
\]

iii) (transversality and curvature) There exists $\nu >0$ such that for any $\zeta_i \in S_i, i \in \{1,..,k\}$, for any $l \in \{1,..,k\}$ and for any orthonormal basis 
$v_{k+1},..,v_{n+1} \in (T_{\zeta_l} S_{l,\alpha})^\perp \subset T_{\zeta_l} S_{l,\alpha}$ the following holds true 
\begin{equation} \label{curva}
vol ( N_1(\zeta_1), .., N_k(\zeta_k), S_{N_l(\zeta_l)} v_{k+1} ,  ... , S_{N_l(\zeta_l)} v_{n+1} ) \geq \nu. 
\end{equation}

In \eqref{curva} $vol$ is the standard volume form of $n+1$ vectors in $\R^{n+1}$, thus the condition quantifies the linear independence of the vectors $N_1(\zeta_1), .., N_k(\zeta_k), S_{N_l(\zeta_l)} v_{k+1} ,  ... , S_{N_l(\zeta_l)} v_{n+1}$. 

The condition ii) says that $S_{i,\alpha}$ are, in some sense, completely flat components of $S_i$ since, besides being subsets of
affine planes of dimension $k-1$, the normal $N(\zeta)$ to $S_i$ is constant as we vary $\zeta$ along $S_{i,\alpha}$ for fixed $\alpha$. 

The first things to read in condition iii) is the transversality condition between $S_1, .., S_k$ due to the transversality between any choice on normals.  The condition iii) also says that the submanifolds transversal to the leafs carry the curvature assumptions, in the sense that their tangent space does not contain any eigenvectors of the shape operator. In addition, for each $i \in \{1,..,k\}$, we are guaranteed to have transversality between $N_1(\zeta_1), .. , N_k(\zeta_k)$ and $S_{N_i} ( T_{\zeta_i} (S_{l,\alpha})^\perp) $. 
 
 In fact iii) is equivalent to the apparently weaker condition:
 
 iii') There exists $\nu >0$ such that for any $\zeta_i \in S_i, i \in \{1,..,k\}$, for any $l \in \{1,..,k\}$ and for any  unit vector
 $v \in (T_{\zeta_l} S_{l,\alpha})^\perp \subset T_{\zeta_l} S_{l,\alpha}$ the following holds true 
\begin{equation} \label{curva2}
vol ( N_1(\zeta_1), .., N_k(\zeta_k), S_{N_l(\zeta_l)} v) \geq \nu. 
\end{equation}
 Obviously here $vol$ stands for the $k+1$-dimensional volume of the vectors. 
 
At this point we can state the main result of this paper. 
\begin{theo} \label{mainT}  Assume that $S_1,..,S_k$ satisfy the conditions i)-iii) above. 
Given any $p$ with $p(k)=\frac{2(n+k+1)}{k(n+k-1)}< p \leq \infty$, the following holds true
\begin{equation} \label{mainE}
\| \Pi_{i=1}^k \calE_i f_i \|_{L^p(\R^{n+1})} \leq C(p) \Pi_{i=1}^k \| f_i \|_{L^2(U_i)}, \quad \forall f_i \in L^2(U_i).   
\end{equation}
\end{theo}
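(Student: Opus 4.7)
The plan is to combine a wave packet decomposition at scale $R$ with induction on scales, using the foliation structure to effect a dimensional reduction that isolates the $(n-k+1)$ curved transverse directions from the $(k-1)$ flat leaf directions. I would aim to prove a local form of \eqref{mainE}, namely $\| \Pi_{i=1}^k \calE_i f_i \|_{L^{p}(B_R)} \leq C_\epsilon R^\epsilon \Pi_{i=1}^k \| f_i \|_{L^2(U_i)}$ for any $p > p(k)$ and every $\epsilon>0$, from which the claimed estimate follows by a standard $\epsilon$-removal. As a first step I would decompose each $\calE_i f_i$ into wave packets adapted to $R^{1/2}$-thick, $R$-long tubes oriented along $N_i$ at their source point on $S_i$.

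The essential use of hypotheses i)--ii) is that $N_i$ is constant along each leaf $S_{i,\alpha}$, so all wave packets arising from a single leaf are parallel translates of one another in their common leaf direction. Summing such packets produces a \emph{leaf packet}---effectively, an $(n-k+2)$-dimensional wave packet in the directions transverse to the leaf, convolved with a profile free to spread along the $(k-1)$ flat leaf directions. The positions of these leaf packets are parametrized by an $(n-k+1)$-dimensional transverse cross-section of $S_i$, on which condition iii) guarantees nontrivial curvature: the shape operator $S_{N_i}$ is non-degenerate on transverse tangent vectors and, jointly with the $k$ normals, spans $\R^{n+1}$. Schematically, the $k$ families of leaf packets play the role of wave packets for a $k$-linear problem on curved hypersurfaces in an $(n-k+2)$-dimensional reduced ambient space, with scaling corrections arising from the uncollapsed leaf volumes. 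I would close the argument by coupling the Bennett--Carbery--Tao transversality estimate (base exponent $\tfrac{2}{k-1}$) with a curvature-driven bootstrap in the transverse directions, propagated through the induction on $R$; the target exponent $p(k) = \tfrac{2(n+k+1)}{k(n+k-1)}$ should emerge from balancing the gain from transverse curvature against the leaf-direction scaling loss.

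The main obstacle I expect is that the $k$ foliations of $S_1,\ldots,S_k$ point in \emph{different} $(k-1)$-dimensional directions, so leaf packets of distinct surfaces do not share a common reduced ambient space: their mutual alignment must be controlled by the combined transversality and curvature hypothesis iii) rather than by any single foliation alone. For $k=3$ the leaves are only $2$-dimensional and the overlap geometry can be handled by bilinear Kakeya-type input, as in \cite{Be3}; for $k \geq 4$ the leaves have dimension $\geq 3$ and the combinatorial geometry of intersecting $(n-k+2)$-dimensional slabs becomes genuinely more intricate, forcing one to track the joint orientation data across all $k$ surfaces simultaneously. I suspect this is where the ``conceptual'' innovations the author alludes to enter, probably via a multi-scale or iterated wave packet decomposition adapted to all $k$ foliations at once, so that the induction on scales closes without losing the transverse curvature input at any stage.
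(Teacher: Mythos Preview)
Your proposal is an outline rather than a proof, and the mechanism you sketch---a dimensional reduction via ``leaf packets'' to an $(n-k+2)$-dimensional reduced problem, then coupling Bennett--Carbery--Tao with a curvature bootstrap---is not the route the paper takes, nor is it clear it can be made to close. The obstacle you yourself flag (the $k$ foliations point in different directions, so there is no common reduced ambient space) is real, and you do not offer a way around it; ``multi-scale or iterated wave packet decomposition adapted to all $k$ foliations at once'' is a hope, not a step.

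The paper's argument is structurally different. It runs Tao's \emph{table construction} (originating in the bilinear cone paper): on a cube $Q$ of side $R$, one decomposes $\phi_1$ into wave packets $\phi_{1,T_1}$ and then builds, for each sub-cube $q_0$ at a coarse scale $2^{-C_0}R$, a free wave $\Phi_1^{(q_0)} = \sum_{T_1} \frac{m_{q_0,T_1}}{m_{T_1}} \phi_{1,T_1}$, where the weights $m_{q_0,T_1} = \|\tilde\chi_{T_1}\phi_2\|_{L^2(q_0)}^2$ depend on $\phi_2$. The induction closes once one proves the off-diagonal bound
\[
\|\Phi_1^{(q')}\,\textstyle\prod_{i=2}^k \phi_i\|_{L^{2/(k-1)}((1-c)q'')} \lesssim c^{-C} R^{-(n-k+1)/4}\,\textstyle\prod_i M(\phi_i)^{1/2}
\]
for $q'\neq q''$. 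This is where the new content lies, and it requires two ingredients you do not anticipate. First, since $L^{2/(k-1)}$ is not normed for $k\ge 4$, one cannot simply sum the localized multilinear estimate over the packets $\phi_{1,T_1}$; the paper proves a \emph{superposition} version of the localized $k$-linear estimate (Theorem~\ref{MB2} and Corollary~\ref{MBcor}) that handles $\sum_\alpha |\calE_{1,\alpha}f_{1,\alpha}|$ directly, via an induction on scales with off-diagonal decay. Second, after Cauchy--Schwarz the argument produces a factor $\|\phi_2\|_{L^2(S(q))}$, where $S(q)$ is a thickened $(n-k+1)$-dimensional cone of normals to $S_1$; controlling the resulting expression
\[
\big\|\,\|\phi_2\|_{L^2(S(q))}\,\textstyle\prod_{i=3}^k \|\phi_i\|_{L^2(q)}\,\big\|_{l^{2/(k-2)}_q}
\]
is a genuinely new $(k-1)$-linear estimate (Theorem~\ref{NewRE}), proved by its own induction on scales, and it is here that the transversality-plus-curvature hypothesis iii) is used, via Lemma~\ref{GL}. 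Neither of these two pieces is a dimensional reduction; both live in the full $\R^{n+1}$ and exploit the geometry of the cone of normals rather than collapsing leaf directions.
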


To our best knowledge this result is the first instance when the $k$-linear restriction estimate, with $4 \leq k \leq n$,
is proved for the almost optimal exponent, that is $p > p(k)$. However, very recently Guth formulated in  \cite{Gu-II} a weaker version 
of Conjecture \ref{conjj} which he proves in the case when $S_i$ are subsets of the paraboloid, and  for the same range of parameters $p(k)< p \leq \infty$. The formulation of this weaker version is technical and we skip it here. In \cite{Gu-II}, Guth uses this weaker version to improve the ranges of the linear restriction theory. It is important to note that Guth employs polynomial partition methods to prove his result.
The arguments we use in this paper are very different, see details below.

The result in Theorem \ref{mainT} and the corresponding one in \cite{Be3} show that the Conjecture \ref{conjj} holds true at
least in some model cases. We hope that this result would lead the way towards a complete resolution of the Conjecture,  
which, in turn, should have important consequences. The multilinear theory discussed above has had major impact in other problems. We mention a few such examples: In Harmonic Analysis, the bilinear and 
$n+1$ restriction theory was used to improve results in the context of Schr\"odinger maximal function, see \cite{Bou-SMF,Lee-SMF,TV-CM2, DL},  restriction conjecture, see \cite{Tao-BP,BoGu, Gu-I, Gu-II}, the decoupling conjecture, see \cite{BoDe,BoDeGu}. In Partial Differential Equations, the linear theory inspired the Strichartz estimates, see \cite{Tao-book}, while the bilinear restriction theory is used in the context of more sophisticated techniques, such as the profile decomposition, see \cite{MeVe}, and concentration compactness methods, see \cite{KeMe}. 

Theorem \ref{mainT} reveals the following geometric feature: the optimal $k$-linear restriction estimate discards the effect of $k-1$ curvatures; indeed, each $S_i$ has precisely $k-1$ vanishing principal curvatures, thus it relies only on $n+1-k$ principal curvatures being non-zero, although the actual statements have to be more rigorous. This geometric feature of the problem was conjectured by Bennett, Carbery and Tao in \cite{BeCaTa}.

We continue with  an overview of the paper and highlight some of the elements used the proof of Theorem \ref{mainT}.  The reader may look at the paper as split into two parts: Sections \ref{GS} through \ref{SI} and Sections \ref{LocME} and \ref{CMS}, with Section \ref{SePa} marking the transition between the two. In Sections \ref{GS} through \ref{SI} we adapt to our current setup the standard arguments that are similar to our previous works in the bilinear and trilinear setup, see \cite{Be2} and \cite{Be3}: overview of the geometry of the problem, wave packet theory, table construction and the induction on scales argument. All these ideas originate from the work of Tao \cite{Tao-BW}.

The second part of the paper, Sections \ref{LocME} and \ref{CMS}, contains the novel ideas in this paper and they play a key role in establishing
the improved estimate \eqref{Phia2} in Section \ref{SI}. We note that the equivalent results (to those in Sections \ref{LocME} and \ref{CMS} here) 
in the bilinear and trilinear theory are much simpler, given the structure of the problem, and can be easily derived inside the body of the main argument. 
The results in Sections \ref{LocME} and \ref{CMS} hold in the context of general hypersurfaces, in particular they do not assume the foliation structure or curvature properties used in Theorem \ref{mainT}. We also think that these results are new in the literature and may be of independent interest.

The starting ideas originate in the prior work of the author on the multilinear restriction estimate in \cite{Be1}. 
In that paper we proved that the $k$-linear restriction estimate
\begin{equation} \label{tri2}
\| \Pi_{i=1}^k \calE_i  f_i  \|_{L^\frac2{k-1}(B(0,r))} \leq C(\epsilon) r^\epsilon 
\Pi_{i=1}^k \| f_{i} \|_{L^2(U_{i})}.
\end{equation}
improves under appropriate localizations of one the factors $f_i$. These localizations are precisely the ones carried by the wave packets
appearing in the decomposition of one of the factors $\calE_i f_i$, and one needs to obtain an appropriate estimate for such superpositions 
of wave packets. This was an easy task in the case of the trilinear estimate: the estimate is in $L^1$ and the triangle inequality 
holds true. The triangle inequality fails to hold true in the spaces $L^\frac2{k-1}$ with $k \geq 4$; the way to deal with this aspect is to further refine the techniques developed in \cite{Be1} and derive good "off-diagonal" type estimates, which in turn give the desired superposition estimate with the correct localization gain, see Theorem \ref{MB2}. 
A further localization to cubes is needed for technical reasons, see Corollary \ref{MBcor}. This analysis is carried out in Section \ref{LocME}.

In Section \ref{CMS} we prove the following estimate:
\begin{equation} \label{CKi}
\|  \| \calE_1 f_1 \|_{L^2(S(q))} \Pi_{i=2}^{k'} \| \calE_i f_i \|_{L^2(q)}\|_{l^\frac2{k'-1}_q}
 \leq C(\epsilon) r^\frac{k'}2 r^\epsilon \Pi_{i=1}^{k} \| f_i \|_{L^2(U_i)}.
\end{equation}
We note that the $k'$ above is related to the $k$ in our problem by the simple relation $k'=k-1$. Here $q$ are cubes of size $r$
and the $l^\frac{2}{k'-1}_q$ is taken over such cubes contained in a larger cube of size $r^2$; $S(q)=S+q$ where $S$ is a surface with
some "good" properties. This estimate has the character of a $k$-linear restriction estimate, although it is more complex due to the factor 
$ \| \calE_1 f_1 \|_{L^2(S(q))}$; if $S$ was a point (that is of dimension zero), then the above estimate is similar to the $k$-linear restriction estimate; however the surface we encounter has the maximal dimension that allows \eqref{CKi} to hold true. Another interesting aspect is that 
the maximal dimension of $S$ saturates the estimate \eqref{CKi} in the following sense: while for $k < n+1$, \eqref{tri2} improves under appropriate localizations of some $f_i$, \eqref{CKi} does not, just as the $n+1$-linear restriction estimate does not improve under localizations. 

We identified \eqref{CKi} as the necessary ingredient to closing the improved estimate \eqref{Phia2} in Section \ref{SI}.
We note that in the bilinear theory $k'=1$, and the estimate \eqref{CKi} corresponds to an energy estimate for a free wave across hypersurfaces 
that are transversal to its directions of propagation; this is a classical tool in PDE. In the trilinear theory $k'=2$ the estimate \eqref{CKi} 
is an $l^2$ type one that can be dealt with in a direct manner, by using wave packet decompositions for both free waves and some analysis on their interaction. 
It is in the case $k \geq 4$, that the true character of \eqref{CKi} comes into light. The analysis of the estimate \eqref{CKi} is carried out in Section \ref{CMS}.

\subsection{Notation}

We start by clarifying the role of various constants that appear in the argument. 
$N$ is a large integer that depends only on the dimension. $C$ is a large constant that may change
from line to line, may depend on $N$, but not on $c$ and $C_0$ introduced below. $C$ is used in the definition 
of: $A \ls B$, meaning $A \leq C B$,  $A \ll B$, meaning $A \leq C^{-1} B$, and $A \approx B$, meaning $A \les B \wedge B \les A$.
For a given number $r \geq 0$, by $A = O(r)$ we mean that $A \approx r$.  $C_0$ is a constant that is independent of any other constant and its role is to reduce the size of cubes in the inductive argument.  It can be set $C_0=4$ throughout the argument, but we keep it this way so that its role in the argument is not lost. $c \ll 1$ is a very small variable meant to make expressions $\ll 1$ and most estimates will be stated to hold in a range of $c$.

We use the standard notation $(\xi_1,..,\bar \xi_i,..\xi_{l}):= (\xi_1,..,\xi_{i-1}, \xi_{i+1}..,\xi_{l})$ to mean that one component is missing.

By powers of type $R^{\alpha+}$ we mean $R^{\alpha+\epsilon}$ for arbitrary $\epsilon > 0$. Practically they should be seen
as $R^{\alpha+\epsilon}$ for arbitrary $0 < \epsilon \ls 1$. The estimates where such powers occur will obviously depend on $\epsilon$. 

By $B(x,R)$ we denote the ball centered at $x$ with radius $R$ in the underlying space (most of the time it will be $\R^n$ or $\R^{n+1}$). 

Let $\eta_0:\R^n \rightarrow [0,+\infty)$ be a Schwartz function, normalized in $L^1$, that is $\| \eta_0 \|_{L^1}=1$,
and with Fourier transform supported on the unit ball. Given some $r>0$ we denote by $\eta_r(x)=r^{-n} \eta_0(r^{-1} x)$
and note that $\hat \eta_r$ is supported in $B(0,r)$. We will abuse notation and use the same $\eta_0$ for functions with 
the same properties, but with a different base space, such as $\eta_0:\R^{n+1} \rightarrow [0,+\infty)$.

A disk $D \subset \R^{n+1}$ has the form
\[
D=D(x_D,t_D;r_D)=\{(x,t_D) \in \R^{n+1}: |x-x_D| \leq r_D\},
\]
for some $(x_D,t_D) \in \R^{n+1}$ and $r_D > 0$. We define the associated smooth cut-off
\[
\tilde \chi_{D}(x,t)= (1+\frac{|x-x_D|}{r_D})^{-N}. 
\] 

A cube $Q \subset \R^{n+1}$ of size $R$ has the standard definition $Q=\{(x,t) \in \R^{n+1}: \|(x-x_Q,t-t_Q) \|_{l^\infty} \leq \frac{R}2 \}$, where $c_Q=(x_Q,t_Q)$ is the center of the cube. Given a constant $\alpha >0$ we define $\alpha Q$ to be the dilated by $\alpha$ of $Q$ from its center, that is
$\alpha Q=\{(x,t) \in \R^{n+1}: \|(x-x_Q,t-t_Q) \|_{l^\infty} \leq \alpha \cdot \frac{R}2 \}$. 

Given a cube $q \subset \R^{n+1}$ of size $r$
we will use two functions that are highly concentrated in $q$. One is build with the help of $\eta_0$ (as mentioned earlier, we abuse
notation here as we should be using the corresponding $\eta_0:\R^{n+1} \rightarrow [0,+\infty)$ with similar properties):
\[
\chi_{q}(x) = \eta_0 (\frac{x-c(q)}r).
\]
This localization function has nice properties on the Fourier side. The other localization function is
\[
\tilde \chi_{q}(x) = (1+ |\frac{x-c(q)}r|)^{-N},
\]
for some large $N$. This localization has better properties on the physical side. 

We recall the standard estimate for superpositions of functions in $L^p$ for $p \leq 1$:
\begin{equation} \label{triangle}
\| \sum_\alpha f_\alpha \|_{L^p}^p \leq \sum_\alpha \| f_\alpha \|_{L^p}^p,
\end{equation}
as well as the following estimate for sequences 
\begin{equation} \label{ab}
\| a_i \cdot b_i \|_{l^{\frac2{k-1}}_i} \les \| a_i \|_{l^2_i}  \| b_i \|_{l^{\frac2{k-2}}_i}. 
\end{equation}

\subsection*{Acknowledgement}
Part of this work was supported by a grant from the Simons Foundation ($\# 359929$, Ioan Bejenaru).
Part of this work was supported by the National Science Foundation under grant No. $0932078000$ while the author was in residence at the 
Mathematical Research Sciences Institute in Berkeley, California, during the Fall 2015 semester.

\section{Geometry of the surfaces and consequences} \label{GS}

We start by simplifying the setup. The surfaces are bounded, therefore we can always break them into smaller (and similar) pieces such as to accommodate the additional hypothesis described bellow. 

First we note that we can assume each $S_i$ to be of graph type: there is a smooth map 
$\varphi_i: U_i \subset \R^n \rightarrow \R$ such that $S=\{\Sigma_i(\xi)= (\xi,\varphi_i(\xi)): \xi \in U_i \}$.
Here $U_i$ are open, connected with compact closure. 
It is less important that the graphs are of type $\zeta_{n+1}=\varphi_i(\zeta_1,..,\zeta_{n})$ (we can have as well $\zeta_k=\varphi_i(\zeta_1,..,\bar \zeta_k,..\zeta_{n+1})$), although we can accommodate this by a rotation of coordinates. Then each flat leaf $S_{i,\alpha}$ corresponds to a flat leaf $U_{i,\alpha}$, in the sense that $\Sigma_i(U_{i,\alpha})=S_{i,\alpha}$; this is indeed the case since projections onto hyperplanes along a vector transversal to $S_i$ takes $k-1$-dimensional affine planes to $k-1$-dimensional affine planes. 

 We can find a system of coordinates $\bx_i: \R^n \rightarrow \R^n$ that parametrizes each leaf $U_{i,\alpha}$ into a new flat leaf 
$\tilde U_{i,\alpha}$ characterized by $\xi_k=constant,..,\xi_{n}=constant$. Finally, we assume that each $U_i$ has small enough diameter. 

Next, we derive a key geometric consequence of our setup. Given a surface $S_i$ we define $\calN_i=\{ N_i(\zeta_i): \zeta_i \in S_i \}$ to 
be the set of normals at $S_i$. By $dspan \calN_i$ we denote the following subset of the classical span of $\calN_i$:
\[
dspan \calN_i = \{ \alpha N_\alpha + \beta N_\beta: N_\alpha, N_\beta \in \calN_i, \alpha, \beta \in \R \}.
\] 
$dspan \calN_i $ is the set of linear combinations of two vectors in $\calN_i$; it is not a linear subspace. 

Given a set of indexes $I \subset \{1,2,..,k\}$ we also define 
\[
d \calN_I = \{ \alpha N_\alpha + \beta N_\beta: N_\alpha \in \calN_i, N_\beta \in \calN_j, i,j \in I, i \neq j, \alpha, \beta \in \R \}.
\]

With these notation in place, we claim the following result.
\begin{lema} \label{GL}
Assume $S_i, i=1,..,k$ satisfy the conditions i)-iii). Let $I=\{3,..,k\}$. Then 
for any $N \in dspan \calN_1$, $N_2 \in \calN_2$ and $\tilde N \in d\calN_{I}$,
the following holds true:
\begin{equation} \label{GLe}
vol( N, N_2, \tilde N| \ges |N| \cdot |N_2| \cdot |\tilde N| .
\end{equation}
 \end{lema}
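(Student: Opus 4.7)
The plan is to factor $vol(N, N_2, \tilde N) = |P_W^\perp N| \cdot |N_2 \wedge \tilde N|$, where $W = \mathrm{span}(N_2, \tilde N)$ and $P_W^\perp$ is the orthogonal projection onto $W^\perp$, and to lower bound each factor separately. This reduces the lemma to (a) $|N_2 \wedge \tilde N| \ges |N_2| |\tilde N|$ and (b) $|P_W^\perp N| \ges |N|$. A recurring ingredient is the following consequence of condition iii): since the $(n+1)$-volume of $\{N_1(\zeta_1), \ldots, N_k(\zeta_k), S_{N_l(\zeta_l)} v_{k+1}, \ldots, S_{N_l(\zeta_l)} v_{n+1}\}$ is $\ges \nu$ and each of these vectors has bounded norm, the $m$-volume of any $m$-subset is also $\ges \nu$, up to a constant.

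Part (a) is a quadratic form estimate. Write $\tilde N = \gamma N_\gamma + \delta N_\delta$ with $N_\gamma \in \calN_{i_0}$, $N_\delta \in \calN_{j_0}$ for distinct $i_0, j_0 \in I$. Both $|N_2 \wedge \tilde N|^2$ and $|\tilde N|^2$ are $2 \times 2$ quadratic forms in $(\gamma, \delta)$ with trace $\les 1$ and with determinants $vol(N_2, N_\gamma, N_\delta)^2 \ges \nu^2$ and $|N_\gamma \wedge N_\delta|^2 \ges \nu^2$ respectively (both from the sub-volume principle above). Comparing their smallest eigenvalues yields (a).

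For (b), write $N = \alpha N_\alpha + \beta N_\beta$ with $N_\alpha = N_1(\zeta_\alpha)$, $N_\beta = N_1(\zeta_\beta)$. Using the small-diameter hypothesis on $U_1$, Taylor expand $N_\beta = N_\alpha - \eta + O(|\tilde u|^2)$, where $\tilde u \in T_{\zeta_\alpha} S_1$ represents the parameter displacement and $\eta = S_{N_1(\zeta_\alpha)} \tilde u$; by condition ii) only the transversal part of $\tilde u$ contributes. Setting $p = \alpha + \beta$, $q = -\beta$, this gives $N = p N_\alpha + q \eta + O(|\tilde u|^2 |\beta|)$. Condition iii) yields a quantitative direct sum decomposition of $\R^{n+1}$ into $\mathrm{span}(N_1(\zeta_\alpha), N_2, \ldots, N_k)$ and the range of $S_{N_1(\zeta_\alpha)}$, and since $W$ lies in the first factor, further sub-volume extractions on $\{N_\alpha, \eta, N_2, N_\gamma, N_\delta\}$ yield $|P_W^\perp N_\alpha| \ges \nu$, $|P_W^\perp \eta| \ges |\eta|$, and $|P_W^\perp N_\alpha \wedge P_W^\perp \eta| \ges \nu^2 |\eta|$ (the last from the 5-volume bound $vol(N_\alpha, \eta, N_2, N_\gamma, N_\delta) \ges \nu |\eta|$ combined with a QF reduction on $\tilde N$). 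A final QF analysis in $(p, q)$ then dominates $|P_W^\perp(p N_\alpha + q \eta)|^2 \ges \nu^4 (p^2 + q^2 |\eta|^2) \approx \nu^4 |N|^2$. The main obstacle is to absorb the Taylor error $O(|\tilde u|^2 |\beta|)$: since $|\eta| \ges |\tilde u|$ by the non-degeneracy of $S_{N_1}$ in condition iii), the error is $O(\mathrm{diam}(U_1))$ times the leading $q\eta$-contribution, hence negligible under the small-$U_1$ hypothesis adopted at the start of Section \ref{GS}.
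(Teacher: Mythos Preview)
Your proof is correct and follows essentially the same strategy as the paper's: Taylor-expand one of the two normals in $N=\alpha N_\alpha+\beta N_\beta$ via the shape operator to write $N\approx (\alpha+\beta)N_\alpha + \beta\,S_{N_1(\zeta_\alpha)}\tilde u$, and then use condition~iii) to get quantitative transversality of $N_\alpha$ and $S_{N_1}\tilde u$ against $N_2$ and $\tilde N$. The paper packages this by invoking the single $4$-volume bound $vol(N_1(\gamma(0)),S_{N_1(\gamma(0))}v,N_2,\tilde N)\ges|\tilde N|$ and reading off the $3$-volume estimate directly, whereas you factor through the projection $P_W^\perp$ and run several explicit $2\times2$ quadratic-form comparisons. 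These are equivalent reformulations of the same linear-algebra content; your version is more granular but buys nothing new.

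One phrasing point to tighten: when you absorb the Taylor remainder $O(|\tilde u|^2|\beta|)$ by invoking $|\eta|\ges|\tilde u|$, this requires that $\tilde u$ itself be (essentially) perpendicular to the leaf, not merely that its leaf-tangential part is killed by $S_{N_1}$. Since the normal is constant along leaves (condition~ii)), you are free to choose the representative $\zeta_\beta$ so that the displacement $\tilde u$ is transversal to the leaf through $\zeta_\alpha$; the paper makes this explicit by taking $\gamma'(0)\in (T_{\gamma(0)}S_{1,\alpha_0})^\perp$. With that choice your error bound goes through as written. Also, the remark that ``$W$ lies in the first factor'' of the direct-sum decomposition is only literally true once you pick the base points $\zeta_2,\zeta_{i_0},\zeta_{j_0}$ in that decomposition to match the ones defining $N_2,N_\gamma,N_\delta$; since condition~iii) holds for all such choices this is harmless, and in any case your actual estimates come from the sub-volume extractions rather than the direct sum itself.
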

Obviously the above statement is symmetric as we can switch the particular role of each $S_i, i=1,..,k$ plays in the above estimate.

\begin{proof} The proof is similar to the one provided in \cite{Be3}. We write $N=\alpha N_\alpha + \beta N_\beta$ for some $N_\alpha \ne N_\beta$ and consider $\gamma: [0,t_0] \rightarrow S_1$, a smooth curve with the property that 
$ N_1(\gamma(0))=N_\alpha$ and $ N_1(\gamma(t_0))=N_\beta$. We also assume that $|\gamma'(t)| = 1$ on $[0,t_0]$ and that $0 \leq t_0 \ll 1$; this is possible because we assumed $U_1$ of small diameter.  In addition, if $\alpha_0$ is such that $\gamma(0) \in S_{i,\alpha_0}$, we can assume that 
$\gamma'(0) \in (T_{\gamma(0)} S_{1,\alpha_0})^\perp$.  Then we have
\[
\begin{split}
N_1(\gamma(t_0)) & = N_1(\gamma(0)) + \int_{0}^{t_0} S_{N_1(\gamma(s))} \gamma'(s) ds \\
& = N_1(\gamma(0)) + t_0 S_{N_1(\gamma(0))} \gamma'(0) + O(t_0^2).  
\end{split}
\]
We then continue with
\[
\begin{split}
N & = \alpha N_1(\gamma(0))+ \beta (N_1(\gamma(0)) + t_0 S_{N_1(\gamma(0))} \gamma'(0) + O(t_0^2))  \\
& = (\alpha+\beta) N_1(\gamma(0)) + \beta t_0 S_{N_1(\gamma(0))} \gamma'(0) + \beta O(t_0^2) 
\end{split}
\]
The two vectors $N_1(\gamma(0))$ and $S_{N_1(\gamma(0))} \gamma'(0)$ are transversal,  thus 
$|N| \approx |\alpha+\beta| + t_0 |\beta|  |S_{N_1(\gamma(0))} \gamma'(0)|$ (here we use that $t_0 \ll 1$), and also 
\[
\begin{split}
vol(N,N_2, \tilde N) & \approx vol((\alpha+\beta) N_1(\gamma(0)) + \beta t_0 S_{N_1(\gamma(0))} \gamma'(0) ,N_2, \tilde N) \\
& \ges |(\alpha+\beta) N_1(\gamma(0)) + \beta t_0 S_{N_1(\gamma(0))} \gamma'(0)| \cdot |\tilde N| \approx |N| \cdot |\tilde N|,
\end{split}
\]
where we have used the following consequence of \eqref{curva}:
\[
vol(N_1(\gamma(0)), S_{N_1(\gamma(0))} v ,N_2, \tilde N) \ges |\tilde N|,
\]
which holds true for any unit vector $v \in (T_{\gamma(0)} S_{1,\alpha_0})^\perp \subset T_{\gamma(0)} S_{1}$
and any vector $\tilde N \in d \calN_I$.
\end{proof}

Using a similar argument as above, one can easily establish the following dispersive estimate
\begin{equation} \label{dN}
| N_i(\zeta_1) - N_i(\zeta_2) | \approx d(S_{i,\alpha_1}, S_{i,\alpha_2})
\end{equation}
where $S_{i,\alpha_1}, S_{i,\alpha_2}$ are the leafs to which $\zeta_1, \zeta_2$ belong to, respectively. Here the distance between 
$S_{i,\alpha_1}$ and $S_{i,\alpha_2}$ can be defined either by using geodesics inside the hypersurface $S_i$ (using the induced metric from the ambient space $R^{n+1}$) or, equivalently, by using the classical distance between sets in $R^{n+1}$.

\section{Free waves, wave packets and tables on cubes} \label{CT}
 
In this section we collect some of the preparatory ingredients that are needed in the proof of our main result. The setup described here originates in the work of Tao on the bilinear restriction estimate \cite{Tao-BW}. 
All of the results here have been discussed in our previous works, see \cite{Be2} and \cite{Be3}. We do not repeat some of the proofs as they are similar to those found in \cite{Be2} and \cite{Be3}.
 
 \subsection{Rephrasing the problem in terms of free waves} \label{FW}
 
 We reformulate our problem in terms of free waves, this being motivated by the use of wave packets in the proof of Theorem \ref{mainT}. Once the wave packet decomposition is made and its properties are clear, the formalization of the problem as an evolution equation can be forgotten. 

Assume we are given a surface $S$ with a graph type parametrization $\zeta_{n+1}=\varphi(\xi)$ where $\xi = (\zeta_1,..,\zeta_n)$.
We rename the variable $\zeta_{n+1}$ by $\tau$, thus the equation of $S$ becomes $\tau=\varphi(\xi)$. We parametrize the physical space by $(x,t) \in \R^{n} \times \R$. We make the choice that $\tau$ is the Fourier variable corresponding to $t$, while
$\xi$ is the Fourier variable corresponding to $x$.  In what follows we use the convention that $\hat f$ denotes the Fourier transform of $f$ with respect to the $x$ variable.

We define the free wave $\phi = \calE f$ as follows
 \[
 \phi(x,t) = \calE f(x,t) = \int_{\R^n} e^{i(x\cdot \xi + t \varphi(\xi))} f(\xi) d\xi.
 \]
 Note that $\phi(0)=\check{f}$ and $\hat \phi(\xi,t) = e^{it \varphi(\xi)} \hat \phi(\xi,0)$. 
 We define the mass of a free wave by $M(\phi(t)):= \| \phi(t) \|^2_{L^2}$ and note that it is time independent:
 \[
 M(\phi(t)):= \| \phi(t) \|^2_{L^2}= \| \hat \phi(t) \|^2_{L^2} = \| \hat \phi(0) \|^2_{L^2}= \| \phi(0) \|^2_{L^2}= M(\phi(0)). 
 \]
The proof of \eqref{mainE} relies on estimating $\Pi_{i=1}^k \calE_i f_i $ on cubes on the physical side and see how this 
behaves as the size of the cube goes to infinity by using an inductive type argument with respect to the size of the cube.
Before we formalize this strategy, we note that at every stage of the inductive argument we re-localize functions both on the physical and frequency space, and, as a consequence, we need to quantify the new support on the frequency side. This will be done by using the using the margin of a function. 

We assume we are given a reference set $V$ inside which we want to keep all functions supported. If $f$ is supported in $U \subset V$ we define
the margin of $f$ relative to $V$ by
\[
\mbox{margin}(f) := \mbox{dist}(\mbox{supp} (f), V^c). 
\]
In terms of free waves $\phi = \calE f$, the margin is defined by
\[
\mbox{margin}(\phi(t)) := \mbox{dist}(\mbox{supp}_\xi (\hat \phi(t)), V^c)= \mbox{dist}(\mbox{supp} (f), V^c),
\]
where we have used that the Fourier support of $\hat \phi(t)$ is time independent and that $\hat \phi(0)=f$. In other words, 
the margin of a free wave is time independent. 

In practice, we work with $k$ different types of free waves, $\phi_i = \calE_i f_i, i=1,..,k$. They are assumed to be graphs with different phase functions
$\varphi_i$ and with potentially different ambient domain, that is $U_i$ are subsets of different subspaces isomorphic to $\R^n$ (for instance $U_i$ can be 
subsets of the hyperplanes $\xi_i=0$). The above construction changes only in making the choice of $\tau$ being the coordinate in the direction normal to the ambient hyperplane to which $U_i$ belongs to, while $\xi$ are the coordinates in the ambient hyperplane. Obviously, the margin of each $\phi_i$ is then defined with respect to some $V_i$ in the same ambient hyperplane. When choosing the reference sets $V_i$ we need to impose that the conditions i)-iii) hold true on $\Sigma_i(V_i)$ as well. 

Next, we prepare the elements that are needed for the induction on scale argument. Given that the estimate is trivial for $p=\infty$, it suffices to focus on the result above in the cases $p(k) < p \leq \frac2{k-1}$ and this is what we will do. Note that the exponent $\frac2{k-1}$ is precisely the one for which the $k$-linear restriction theory is expected to hold true without any curvature assumptions.

\begin{defin} Let $p(k) \leq p \leq \frac{2}{k-1}$. Given $R \geq C_0$ we define $A_p(R)$ to be the best constant for which the estimate
\begin{equation}
\| \Pi_{i=1}^k \phi_i   \|_{L^p(Q_R)} \leq A_p(R)  \Pi_{i=1}^k M(\phi_i)^\frac12. 
\end{equation}
holds true for all cubes $Q_R$ of size-length $R$, $\phi_i=\calE_i f_i$ and 
obeying the margin requirement
\begin{equation} \label{mrpg}
margin^i(\phi_i) \geq M-R^{-\frac14}, i=1,..,k.
\end{equation}
\end{defin}

The goal is to obtain an uniform estimate on $A_p(R)$ with respect to $R$. In the absence of the margin requirement above, 
$A_p(R)$ would be an increasing function. However, since the argument needs to tolerate 
the margin relaxation, we also define
\[
\bar A_p(R):= \sup_{1 \leq r \leq R} A_p(r) 
\]
and the new $\bar A_p(R)$ is obviously increasing with respect to $R$.  

Then \eqref{mainE}, and, as a consequence, the main result of this paper, Theorem \ref{mainT}, follow from the next result. 
\begin{prop} \label{keyP} Assume $0 < \epsilon < 1$. If $R \gg 2^{2C_0}$ and $R^{-\frac14+} \ll c \ll 1$, there exists $C(\epsilon)$
such that the following holds true:
\begin{equation} \label{ApR}
A_p(R) \leq (1+cC) \left( (1+cC)^p \left( \bar A_p(\frac{R}2) \right)^p   + 
\left( C(\epsilon) c^{-C} R^{ \frac{n+k+1}2(\frac1p-\frac{k}2 \cdot \frac{n+k-1}{n+k+1})+\epsilon} \right)^p \right)^\frac1p.
\end{equation}
\end{prop}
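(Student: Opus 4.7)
The proof proceeds by induction on scales, following the broad scheme of Tao \cite{Tao-BW} as adapted in \cite{Be2,Be3}. Fix a cube $Q_R$ of size $R$ together with free waves $\phi_i=\calE_i f_i$ satisfying the margin requirement \eqref{mrpg}. The plan is to partition $Q_R$ into sub-cubes $\{q\}$ of size $R/(2C_0)$ and, on each $q$, to produce two competing bounds: one supplied by the induction hypothesis at scale $R/2$ and a second supplied by an improved "gain" estimate of the type \eqref{Phia2} that Section \ref{SI} will provide. Summing the $p$-th powers over $q$ via the quasi-triangle inequality \eqref{triangle} (valid because $p\leq\tfrac{2}{k-1}\leq 1$ for $k\geq 3$) will then yield \eqref{ApR}.

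The first step is the wave packet decomposition of Section \ref{CT}: for each $i$ we write $\phi_i=\sum_T \phi_{i,T}$, where $T$ ranges over tubes adapted to $S_i$ at scale $R^{1/2}$. For a given sub-cube $q$, split the tubes into a "near" family $\calT_q$ (those passing close to $q$ in the appropriate transverse sense) and a "far" remainder, and set $\phi_{i,q}^{n}=\sum_{T\in\calT_q}\phi_{i,T}$, $\phi_{i,q}^{f}=\phi_i-\phi_{i,q}^{n}$. The near part enjoys: (a) a margin at scale $R/2$ exceeding $M-(R/2)^{-1/4}$, since the $R^{-1/4}$ loss in \eqref{mrpg} is precisely calibrated to absorb the frequency enlargement caused by the tube-scale localization; and (b) a mass bound $M(\phi_{i,q}^{n})\leq (1+cC)M(\phi_i)$ up to a rapidly decaying tail controlled by $\tilde\chi_q$. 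Applying the induction hypothesis on $q$ to $\Pi_i\phi_{i,q}^{n}$ delivers the contribution $(1+cC)\bar A_p(R/2)\Pi_i M(\phi_i)^{1/2}$, which accounts for the first term on the right-hand side of \eqref{ApR}.

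The main obstacle, and the heart of the induction step, is the treatment of the cross terms. Expanding $\Pi_i\phi_i=\Pi_i(\phi_{i,q}^{n}+\phi_{i,q}^{f})$ produces the diagonal term handled above, while every other term contains at least one "far" factor. Here one invokes the localized multilinear superposition estimate (Theorem \ref{MB2}) and its cube-localized refinement (Corollary \ref{MBcor}) from Section \ref{LocME}, which provides an $L^{2/(k-1)}$ gain reflecting the fact that the far part of one factor is a sum of wave packets decoupled from $q$; combining this with the conical energy estimate \eqref{CKi} from Section \ref{CMS}, which distributes masses across cubes with the correct $l^{2/(k'-1)}_q$ summability, yields \eqref{Phia2}. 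On each $q$ this contributes a term of size $C(\epsilon)c^{-C} R^{\alpha(p)+\epsilon}\Pi_i M(\phi_i)^{1/2}$, where $\alpha(p)=\tfrac{n+k+1}{2}(\tfrac{1}{p}-\tfrac{k(n+k-1)}{2(n+k+1)})$. The factor $c^{-C}$ stems from the tolerance used in the near/far separation of tubes, while $R^{\epsilon}$ absorbs the logarithmic losses inherent in such schemes. Note $\alpha(p(k))=0$ and $\alpha(p)<0$ for $p>p(k)$, a sign crucial for closing the induction.

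Taking $p$-th powers, summing over $q$ via \eqref{triangle}, and absorbing the overlap and Schwartz-tail contributions of $\tilde\chi_q$ into the outer $(1+cC)$ prefactor then gives \eqref{ApR}. The hardest step by a wide margin is the derivation of the improved bound from \eqref{Phia2}: the bilinear and trilinear analogues are essentially direct consequences of the $L^2$ triangle inequality or an $l^2$-orthogonality argument, but for $k\geq 4$ the exponent $\tfrac{2}{k-1}$ lies strictly below $1$, so neither orthogonality nor triangle inequalities are available, and one is forced into the genuinely $k$-linear off-diagonal analysis of Sections \ref{LocME} and \ref{CMS}. Once those two sections are in hand, the bookkeeping of margins, tails, and overlaps reduces to a routine iteration of the standard induction-on-scales template.
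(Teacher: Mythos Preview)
Your proposal misses the essential device: the table construction of Proposition~\ref{NLP}. The paper's proof of Proposition~\ref{keyP} (deferred to \cite{Be2,Be3}) does not proceed via a geometric near/far splitting of tubes relative to each sub-cube~$q$; instead one builds, iteratively for each factor, a table $\Phi_i=(\Phi_i^{(q)})_{q\in\calQ_{C_0}(Q)}$ with $\phi_i=\sum_q \Phi_i^{(q)}$, where the weight attaching a tube $T$ to a cube $q_0$ is $m_{q_0,T}/m_T$, i.e.\ the fraction of the \emph{other factor's} mass carried along $T$ inside $q_0$ (see \eqref{dPq0}). The point of this construction is the total-mass bound \eqref{PhiM2},
\[
\sum_{q} M(\Phi_i^{(q)}) \leq (1+cC)\, M(\phi_i),
\]
which (together with Lemma~\ref{AVL}) is exactly what allows the diagonal contributions $\|\Pi_i \Phi_i^{(q)}\|_{L^p((1-c)q)}^p \leq \bar A_p(R/2)^p \Pi_i M(\Phi_i^{(q)})^{p/2}$ to be summed over~$q$ and produce only the factor $(1+cC)$ in front of $\bar A_p(R/2)$.

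Your splitting $\phi_{i,q}^{n}=\sum_{T\in\calT_q}\phi_{i,T}$ based on tubes ``passing close to $q$'' cannot do this: a tube of thickness $\sim c^{-2}R^{1/2}$ threads through $\sim 2^{C_0}$ of the sub-cubes of size $2^{-C_0}R$, so $\sum_q M(\phi_{i,q}^{n})\gtrsim 2^{C_0} M(\phi_i)$, and the diagonal sum produces a fixed constant strictly larger than $1$ in front of $\bar A_p(R/2)$. Iterating then gives $A_p(R)\lesssim R^{C}$, which is vacuous; the whole purpose of the $(1+cC)$ is that with $c$ a small negative power of $R$ the iterated product stays bounded. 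The ingredients you cite from Sections~\ref{LocME} and~\ref{CMS} are indeed what drive the off-diagonal estimate \eqref{Phia2}, but \eqref{Phia2} is a statement about the table components $\Phi_1^{(q')}$ on $(1-c)q''$ with $q'\neq q''$, not about an ad hoc ``far'' piece; without the table construction there is no diagonal term that closes the induction, and the rest of the scheme never gets off the ground.
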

Deriving \eqref{mainE} from \eqref{ApR} is standard, see the corresponding argument in the trilinear case in \cite{Be3}. 
Thus we reduce the proof of Theorem \ref{mainT} to proving Proposition \ref{ApR}.  
 
\subsection{Tables on cubes}
 
Let $Q \subset \R^{n+1}$ be a cube of radius $R$. Given $j \in \N$ we split $Q$ into $2^{(n+1)j}$ cubes of size $2^{-j} R$ and denote this family by $\calQ_j(Q)$; thus we have $Q=\cup_{q \in \calQ_j(Q)} q$.  
If $j \in \N$ and $0 \leq c \ll 1$ we define the $(c,j)$ interior $I^{c,j}(Q)$ of $Q$ by
\begin{equation} \label{icj}
I^{c,j}(Q) := \bigcup_{q \in \calQ_j(Q)} (1-c) q.
\end{equation}
Given $j \in \N$ we define a table $\Phi$ on $Q$ to be a vector $\Phi=(\Phi^{(q)})_{q \in \calQ_j(Q)}$ and define its mass by
\[
M(\Phi) = \sum_{q \in \calQ_j(Q)} M(\Phi^{(q)}). 
\]
We define the margin of a table as the minimum margin of its components:
\[
margin(\Phi) = \min_{q \in \calQ_j(Q)} margin(\Phi^{(q)}).
\]
We recall from \cite{Be3} the following result:
\begin{lema} \label{AVL}
Assume $0 < p < \infty$, $R \gg 1$, $0 < c \ll 1$ and $f$ smooth. Given a cube $Q_R \subset \R^{n+1}$ of size $R$,
there exists a cube $Q$ of size $2R$ contained in $4 Q_R$ such that
\begin{equation} \label{avrg}
\| f \|_{L^p(Q_R)} \leq (1+cC) \|  f\|_{L^p(I^{c,j}(Q))}.
\end{equation}
\end{lema}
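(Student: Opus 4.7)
My plan is to prove the lemma by an averaging argument over translates of the cube $Q$. Without loss of generality assume $Q_R$ is centered at the origin. Consider the family of cubes $Q_v$ of size-length $2R$ centered at $v$, where $v$ ranges over the box $V = [-R/2, R/2]^{n+1}$. A direct inclusion check shows that for each such $v$ one has $Q_R \subset Q_v \subset 4Q_R$, so each $Q_v$ is a legitimate candidate.

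For fixed $v$, let $E_v = Q_v \setminus I^{c,j}(Q_v)$. By the definition \eqref{icj}, $E_v$ is exactly the union over the sub-cubes $q \in \calQ_j(Q_v)$ of the ``boundary frames'' $q \setminus (1-c)q$. Each such frame has measure $(1 - (1-c)^{n+1}) |q| \leq Cc |q|$, so $|E_v| \leq Cc |Q_v|$. More importantly, as $v$ varies the grid $\calQ_j(Q_v)$ is a rigid translate of a fixed grid of spacing $\rho = 2^{-j}\cdot 2R$, and $E_v$ is periodic in $v$ with period $\rho$ in each coordinate. Since $V$ has side length $R \geq \rho$ (trivially for $j \geq 1$; the case $j=0$ reduces to choosing $v=0$ and noting $E_v = Q_v\setminus(1-c)Q_v$ is disjoint from $Q_R$ once $c$ is small), a Fubini / density computation gives
\[
|\{v \in V : x \in E_v\}| \leq (1 - (1-c)^{n+1}) \, |V| \leq Cc\,|V|, \qquad \forall x \in Q_R.
\]

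Integrating $|f|^p \chi_{E_v}$ against this density bound yields
\[
\int_V \int_{Q_R \cap E_v} |f(x)|^p \, dx \, dv \leq Cc\,|V| \int_{Q_R} |f(x)|^p \, dx,
\]
so by pigeonholing there exists $v \in V$ with $\int_{Q_R \cap E_v} |f|^p \leq Cc \int_{Q_R} |f|^p$. Since $Q_R \subset Q_v$, the complement $Q_R \cap I^{c,j}(Q_v)$ then satisfies
\[
\int_{I^{c,j}(Q_v)} |f|^p \geq \int_{Q_R \cap I^{c,j}(Q_v)} |f|^p \geq (1 - Cc) \int_{Q_R} |f|^p.
\]
Taking the $p$-th root and using $(1 - Cc)^{-1/p} \leq 1 + cC'$ for $c \ll 1$ (where $C'$ absorbs the $p$-dependence, uniform for $p$ in the relevant compact range $[p(k), \frac{2}{k-1}]$) gives the claim with $Q = Q_v$.

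The only point that requires minor care is the periodicity/density computation: one must verify that the shift of $v$ by $\rho$ leaves the bad set $E_v$ invariant in the sense relevant to Fubini, and handle the marginal case when $V$ is not an exact union of periods by the trivial observation that boundary cells contribute a lower order error absorbed into the constant $C$. This is routine and not a genuine obstacle, and mirrors the corresponding step in \cite{Be3}.
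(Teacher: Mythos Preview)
Your argument is correct. The paper itself gives no proof of this lemma---it simply recalls the statement from \cite{Be3}---so there is nothing in the paper to compare against. The averaging-over-translates argument you wrote is the standard one used for this statement (originating in Tao \cite{Tao-BW} and repeated in \cite{Be2,Be3}): slide the larger cube $Q_v$ of side $2R$ over a box of side $R$, note that for each fixed $x\in Q_R$ the grid-boundary set $E_v$ contains $x$ for only an $O(c)$-fraction of the translates, integrate against $|f|^p$ and pigeonhole. One small remark: since $R/\rho = 2^{j-1}$ is an integer for every $j\ge 1$, the side of $V$ is always an exact multiple of the period, so the ``marginal case'' you flag in the last paragraph never actually arises.
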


\subsection{Wave packets} \label{SWP}

In this section we formalize the wave packet construction for $k-1$-conical surfaces. We assume that $S$ is of $k-1$-conic type and has the graph-type parametrization $\Sigma: U \rightarrow S$, where $\Sigma(\xi)=(\xi,\varphi(\xi))$ and with foliations
$U =  \cup_\alpha U_\alpha$, $S =  \cup_\alpha S_\alpha$, $\Sigma(U_\alpha)=S_\alpha$. 

For the foliation $U=  \cup_\alpha U_\alpha$, we choose a system of coordinates $\bf x \rm: U \rightarrow \tilde U$
such that for each leaf $U_{\alpha}$, the coordinates of $U_{\alpha}$ are $\xi_k=constant,..,\xi_{n}=constant$. Let
$\tilde U'=\pi(\tilde U)$, where $\pi : \R^{n} \rightarrow \R^{n-k+1}$ is the projection $\pi(\xi_1,..,\xi_n)= (\xi_k,..,\xi_n)$. 
Let $\tilde{\mathcal{L}}$ be a maximal $r^{-1}$-separated subset of $\tilde U' \subset \R^{n-k+1}$. 
For each $\tilde \xi \in \tilde{\mathcal{L}}$, $\bx^{-1}(\cdot, \tilde \xi)$ is a leaf, that is
 $\bx^{-1}(\cdot, \tilde \xi)=U_\alpha$ for some $\alpha$.  In each such leaf we pick $\xi_T$ and call $\mathcal{L}$ to be 
the set obtained this way. It is not important which $\xi_T \in \bx^{-1}(\cdot, \tilde \xi)$ is chosen, since from condition ii) it follows that, for $\xi \in U_\alpha$, the normal $N(\Sigma(\xi))$ to $S$ is constant as $\xi$ varies inside the leaf $U_\alpha$. 
We denote by $U(\xi_T)$ the leaf $U_\alpha$ to which $\xi_T$ belongs and by $S(\xi_T)=\Sigma(U(\xi_T))$, the corresponding leaf on $S$. We note that $d(U(\xi_{T_1}), U(\xi_{T_2})) \approx d(\tilde \xi_1, \tilde \xi_2)$ which combined with \eqref{dN} gives
\begin{equation} \label{disp}
|N(\Sigma(\xi_{T_1})) - N(\Sigma(\xi_{T_2}))| \approx d(U(\xi_{T_1}), U(\xi_{T_2})) \approx d(\tilde \xi_1, \tilde \xi_2). 
\end{equation}

Let $L$ be the lattice $L=c^{-2} r \Z^n$. With $x_T \in L, \xi_T \in \mathcal{L}$ we define the tube $T=T(x_T,\xi_T):=\{ (x,t) \in \R^n \times \R: |x-x_T +  t \nabla \varphi(\xi_T) | \leq c^{-2} r \}$ and denote by $\calT$ the set of such tubes. One notices that $T$ is the $c^{-2}r$ neighborhood of the 
line passing through $(x_T,0)$ and direction $N(\Sigma(\xi_T))$. 

Associated to a tube $T \in \calT$, we define the cut-off $\tilde \chi_T$ on $\R^{n+1}$ by
\[
\tilde \chi_T(x,t)= \tilde \chi_{D(x_T -  t \nabla \varphi (\xi_T) ,t; c^{-2} r)}(x).
\] 
 We are ready to state the main result of this Section. 
\begin{lema} \label{LeWP} Let $Q$ be a cube of radius $R \gg 1$, let $c$ be such that $R^{-\frac14+} \ll c \les 1$ and let $J \in \N$ be such that $r =2^{-J} R \approx R^\frac12$. Let $\phi=\calE f$ be a free wave with $margin(\phi) > 0$. For each $T \in \calT$ there is a free wave
$\phi _T$, that is localized in a neighborhood of size $CR^{-\frac12}$ of the leaf $S(\xi_T)$ and obeying $margin(\phi_T) \geq margin(f)-CR^{-\frac12}$. The map $ f \rightarrow \phi_T$ is linear and 
\begin{equation} \label{lind}
\phi= \sum_{T \in \calT} \phi_{T}.
\end{equation}
If $\mbox{dist}(T,Q) \geq 4 R$ then 
\begin{equation} \label{ld}
\| \phi_T \|_{L^\infty(Q)} \ls c^{-C} dist(T,Q)^{-N} M(\phi)^\frac12.
\end{equation}
The following estimates hold true
\begin{equation} \label{qest}
\sum_{T} \sup_{q \in Q_J(Q)} \tilde \chi_T(x_q,t_q)^{-N} \| \phi_T \|^2_{L^2(q)} \ls c^{-C} r M(\phi)  
\end{equation}
and
\begin{equation} \label{q00e}
\left( \sum_{q_0} M( \sum_T m_{q_0,T} \phi_T)  \right)^\frac12
\leq (1+cC) M(\phi),
\end{equation}
provided that the coefficients $m_{q_0,T} \geq 0$ satisfy
\begin{equation} \label{q0e}
\sum_{q_0} m_{q_0,T}=1, \qquad \forall T \in \calT.
\end{equation}
\end{lema}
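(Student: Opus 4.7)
Since condition (ii) makes $N(\Sigma(\xi))$, and hence $\nabla\varphi(\xi)$, constant along each leaf $U_\alpha$, the wave packet decomposition only needs to discretize the $(n-k+1)$-dimensional transverse frequency direction at scale $r^{-1}=R^{-1/2}$. The plan is to first pull back through $\bx$ a smooth partition of unity on $\tilde U' \subset \R^{n-k+1}$ subordinate to $\tilde{\mathcal{L}}$, producing $\{\psi_{\xi_T}\}_{\xi_T \in \mathcal{L}}$ on $U$; each $\psi_{\xi_T}$ is a plate covering the full leaf $U(\xi_T)$ with transverse width $\sim r^{-1}$. On the physical side, I take a smooth partition of unity $\{\eta_{x_T}\}_{x_T \in L}$ adapted to the lattice $L = c^{-2}r\,\Z^n$ with $\widehat{\eta_{x_T}}$ supported in a ball of radius $\lesssim c^2 r^{-1}$, and then set
\[
f_T := \widehat{\eta_{x_T}} * (\psi_{\xi_T} f), \qquad \phi_T := \calE f_T,
\]
which is linear in $f$ and yields $\sum_T \phi_T = \phi$ by telescoping. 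The Fourier support of $f_T$ sits in the $O(c^2 r^{-1})$-thickening of $U(\xi_T)$, which is contained in the $O(R^{-1/2})$-neighborhood, giving both the margin statement and the frequency concentration of $\phi_T$ near the leaf $S(\xi_T)$.

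For the tube concentration I would write $\phi_T(\cdot, t)$ as a convolution on the physical side of $\calE(\psi_{\xi_T} f)(\cdot, t)$ with $\eta_{x_T}$. Flatness of $U(\xi_T)$ makes $\nabla\varphi$ constant along the leaf, so the only $t$-dependent spreading comes from transverse frequencies, whose variation of $\nabla\varphi$ is $O(r^{-1})$; over the time scale $|t|\lesssim R$ this produces a spatial shift of at most $O(r)$, which the dilated tube of radius $c^{-2} r$ comfortably contains. The far-field bound \eqref{ld} then follows from iterated integration by parts in $\xi$ against the phase $x\cdot\xi + t\varphi(\xi) - (x_T - t\nabla\varphi(\xi_T))\cdot\xi$, each derivative costing a factor $c^{-C}$ and gaining a power of $\mathrm{dist}(T,Q)/(c^{-2}r)$.

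The quasi-orthogonality bounds \eqref{qest} and \eqref{q00e} are the heart of the lemma. For \eqref{qest}, I would combine the tube localization, which gives $\|\phi_T\|_{L^2(q)}^2 \lesssim \tilde\chi_T(x_q,t_q)\, \|\phi_T\|_{L^2(T \cap \mathrm{slab})}^2$, with Plancherel on time slices of thickness $r$ and a sum over $T$ using the near-disjointness of the plates; together these yield the $c^{-C} r\, M(\phi)$ bound. For \eqref{q00e}, Plancherel reduces the problem to
\[
\sum_{q_0} \Bigl\| \sum_{T} m_{q_0,T}\, f_T \Bigr\|_{L^2}^2 \le (1+cC)^2\, \|f\|_{L^2}^2;
\]
I would expand the inner product, exploit that the convolution kernels $\widehat{\eta_{x_T}}$ live on a scale $c^2 r^{-1}$ much smaller than the plate separation $r^{-1}$ so off-diagonal pairings in $\xi_T$ are damped by a factor of $c$, and combine this with \eqref{q0e} via Schur's test to organize the sum over $q_0$.

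\textbf{Main obstacle.} The step I expect to be most delicate is obtaining the sharp constant $(1+cC)$ in \eqref{q00e} rather than a generic $C$. One must trade the contribution from geometrically close but distinct plates in $\xi_T$ against the smallness of $c$ coming from the sub-scale resolution of the $\eta_{x_T}$, while using \eqref{q0e} so that summing over physical cells $q_0$ does not introduce an additional loss. Everything else is a careful adaptation of the wave packet construction from Tao \cite{Tao-BW} and the author's previous works \cite{Be2,Be3} to the foliated geometry, the main simplification being that plates are effectively $(n-k+1)$-dimensional objects transverse to the leaves rather than full $n$-dimensional balls.
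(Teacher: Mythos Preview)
Your sketch is correct and aligns with the approach the paper defers to: the paper does not reprove this lemma but simply states that ``the argument for Lemma \ref{LeWP} is entirely similar'' to Tao \cite{Tao-BW} and the author's trilinear version in \cite{Be3}, and your construction (transverse-only frequency discretization via $\tilde{\mathcal L}$, physical partition at scale $c^{-2}r$, convolution definition $f_T=\widehat{\eta_{x_T}}*(\psi_{\xi_T}f)$, and the Schur-test handling of \eqref{q00e}) is exactly that template adapted to the $k{-}1$-conical foliation. You have also correctly singled out the sharp $(1+cC)$ in \eqref{q00e} as the only genuinely delicate point, which is precisely the feature Tao's original argument is designed to capture via the sub-scale $c^2r^{-1}$ Fourier support of the physical cutoffs.
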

Originally, this type of wave packet decomposition was introduced by Tao in \cite{Tao-BW} in the context of bilinear restriction estimate for conical hypersufaces ($1$-conical in our language). The strength of this result lies in the use of the small parameter $c$ and the tight mass estimate \eqref{q00e}. In the case $c \approx 1$, the above decomposition is the standard wave packet decomposition. 

In the case of double-conical surfaces the analogue result was proved in \cite{Be3}. The argument for Lemma \ref{LeWP} 
is entirely similar to the results just mentioned and we will not duplicate it here. 

In the case $c \approx 1$, we will use the following variation of \eqref{qest}. Fix $N \in \N$; then for each tube $T \in \calT$, 
there are coefficients $c_{N,T}$ such that
\begin{equation} \label{qest1}
\sup_{q \in Q_J(Q)} \tilde \chi_T(x_q,t_q)^{-\frac{N}2} \| \phi_T \|_{L^2(q)} \ls r^\frac12 \cdot  c_{N}(T).   
\end{equation}
with the property that
\begin{equation} \label{massc}
\sum_{T \in \calT} c_{N}(T)^2 \les M(\phi). 
\end{equation}

\section{Table construction and the induction argument} \label{SI}

This section contains the main argument for the proof of Theorem \ref{mainT}. In Proposition \ref{NLP}
we construct tables on cubes: this is a way of re-organizing the information on one term, say $\phi_1$, at smaller scales
based on information from one of the other interacting terms, $\phi_i, i =2,..,k$. This type of argument is inspired by the work on the conic surfaces of Tao in \cite{Tao-BW}. Based on this table construction, we will prove the inductive bound claimed in Proposition \ref{keyP}.

\begin{prop} \label{NLP}

Let $Q$ be a cube of size $R \gg 2^{2C_0}$.
Assume $\phi_i=\calE_i f_i, i = 1,..,k$ have positive margin.  
Then there is a table $\Phi_1=\Phi_c(\phi_1, \phi_2,Q)$ with depth $C_0$
such that the following properties hold true: 
\begin{equation} \label{dec}
\phi_1= \sum_{q \in  \calQ_{C_0}(Q)} \Phi_1^{(q)}, 
\end{equation}
\begin{equation} \label{MPhi2}
margin(\Phi) \geq margin(\phi)-C R^{-\frac12}.
\end{equation}

\begin{equation} \label{PhiM2}
M(\Phi) \leq (1+cC) M(\phi),
\end{equation}
and for any $q',q'' \in \calQ_{C_0}(Q), q'\ne q''$
\begin{equation} \label{Phia2}
\| \Phi_1^{(q')} \Pi_{i=2}^k \|_{L^\frac{2}{k-1}((1-c)q'')} \ls c^{-C} R^{-\frac{n-k+1}4}  \Pi_{i=1}^k M^\frac12(\phi_i).
\end{equation}

\end{prop}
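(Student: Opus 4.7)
The plan is to build $\Phi_1$ by decomposing $\phi_1$ into wave packets at the scale $r \approx R^{1/2}$ and then redistributing those packets among the depth-$C_0$ subcubes $q \in \calQ_{C_0}(Q)$ by means of weights $m_{q,T}$ that record where $\phi_2$ concentrates along each tube $T$.

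The first step is to apply Lemma \ref{LeWP} to $\phi_1$ at scale $r = 2^{-J}R \approx R^{1/2}$ to write $\phi_1 = \sum_{T \in \calT} \phi_{1,T}$, with the usual margin, localization and mass bounds \eqref{lind}, \eqref{ld}, \eqref{qest1}, \eqref{q00e}. In parallel, for every $T \in \calT$ and $q \in \calQ_{C_0}(Q)$ I would choose a weight $m_{q,T} \ge 0$, normalized so that $\sum_q m_{q,T} = 1$ (this is \eqref{q0e}), taken proportional to $\tilde\chi_T(x_q,t_q)^{N}$ times a localized mass of $\phi_2$ on $(1-c)q$. With these weights in hand I would set
\[
\Phi_1^{(q)} \;=\; \sum_{T \in \calT} m_{q,T}\,\phi_{1,T}.
\]
Property \eqref{dec} is then immediate from $\sum_q m_{q,T} = 1$ combined with \eqref{lind}; the margin bound \eqref{MPhi2} is inherited tube-by-tube from Lemma \ref{LeWP}; and \eqref{PhiM2} is the direct application of the tight mass estimate \eqref{q00e} to the weights just constructed. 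These three verifications are routine.

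The main obstacle is the off-diagonal estimate \eqref{Phia2}. The guiding heuristic is that when $q' \ne q''$ and $m_{q',T}$ is appreciable, the construction forces $\phi_2$ to concentrate in $T \cap q'$ and hence to be effectively small on $(1-c)q''$, while any tube $T$ that misses $q''$ contributes negligibly by the rapid decay \eqref{ld}. Converting this heuristic into the precise gain $R^{-(n-k+1)/4} = r^{-(n-k+1)/2}$---which is exactly the codimension count associated to a wave packet Fourier-concentrated on a $(k-1)$-dimensional leaf of $S_1 \subset \R^{n+1}$---is where the novel content of Sections \ref{LocME} and \ref{CMS} enters. Concretely, I would appeal to the localized multilinear restriction estimate of Theorem \ref{MB2}, and its cube-localized Corollary \ref{MBcor}, to control the product $\phi_{1,T}\,\Pi_{i=2}^{k}\phi_i$ on a cube inside $(1-c)q''$ at the wave-packet scale $r$, and then exploit the surface estimate \eqref{CKi} to sum efficiently over tubes. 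The final summation would combine the tube-mass bounds \eqref{qest1}--\eqref{massc}, the super-unit triangle inequality \eqref{triangle} (available since $2/(k-1) \le 1$ for $k \ge 3$), and the H\"older-type bound \eqref{ab}, with the $c^{-C}$ losses tracking the weight partition.
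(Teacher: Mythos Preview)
Your plan follows the paper's approach in all essentials: wave-packet decomposition of $\phi_1$ at scale $r\approx R^{1/2}$ via Lemma~\ref{LeWP}, weights tied to $\phi_2$ so that \eqref{q00e} yields \eqref{PhiM2}, and for the off-diagonal bound \eqref{Phia2} the combination of Corollary~\ref{MBcor} on fine cubes of size $r$ followed by the surface estimate of Theorem~\ref{NewRE} to perform the $l^{2/(k-1)}_q$ summation (after a H\"older split via \eqref{ab} into an $l^2$ piece and an $l^{2/(k-2)}$ piece).

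The one place where your sketch is off is the choice of weight. The paper sets
\[
m_{q_0,T}=\|\tilde\chi_T\,\phi_2\|_{L^2(q_0)}^2,
\]
the mass of $\phi_2$ \emph{along the tube} $T$ inside the coarse subcube $q_0\in\calQ_{C_0}(Q)$. Your proposal of $\tilde\chi_T(x_q,t_q)^N$ times a mass of $\phi_2$ on $(1-c)q$ is not the same: the coarse subcube has side $2^{-C_0}R\gg c^{-2}r$, so evaluating the tube cutoff at its center $(x_q,t_q)$ does not detect whether $\phi_2$ sits near $T$ inside $q$. The precise form matters twice downstream. First, after Cauchy--Schwarz one replaces $m_{q',T}/m_T$ by $(m_{q',T}/m_T)^{1/2}$ and needs $m_T=\|\tilde\chi_T\phi_2\|_{L^2(Q)}^2$ in the denominator to close the $l^2$ piece (this is exactly how \eqref{mes2} is proved). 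Second, the key identity
\[
\sum_{T}m_{q',T}\,\tilde\chi_T(x_q,t_q)\ \lesssim\ \|\tilde\chi_{S(q)}\phi_2\|_{L^2}^2,
\]
with $S(q)$ the translated cone of normals to $S_1$, is what produces the factor $\|\phi_2\|_{L^2(S(q))}$ that feeds into Theorem~\ref{NewRE}; this identity relies on $m_{q',T}$ being an integral of $|\phi_2|^2$ against $\tilde\chi_T$ over $q'$. With the weight corrected to the paper's, the rest of your outline matches.
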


\begin{rem} \label{RNLP}
The above result is stated for scalar $\phi_1,..,\phi_k$, but it holds for vector versions as well. Most important 
is that we can construct $\Phi_1=\Phi_c(\phi_1, \Phi_2, Q)$ where $\Phi_2$ is a vector free wave and all its scalar components 
satisfy similar properties to the $\phi_2$ above. 
\end{rem}

\begin{rem} \label{RNLP1}
We note that $\Phi_1=\Phi_c(\phi_1, \phi_2,Q)$ means that the table  $\Phi_1$ is constructed from $\phi_1$, which is natural
in light of \eqref{dec}, and $\phi_2$. But it does not depend on $\phi_3,..,\phi_k$. Obviously, we could have constructed it from $\phi_1$
and $\phi_3$ (or any other $\phi_k$), ending with a different object. 
\end{rem}

In the proof below we use the results in Sections \ref{LocME} and \ref{CMS} in a crucial way. The reason we provide those results in later Sections is that,
at first reading, it is instructive to get the main points and the motivation for why the results in Sections \ref{LocME} and \ref{CMS} are necessary before getting 
too technical.

\begin{proof}

There are several scales involved in this argument. The large scale is the size $R$ of the cube
$Q$. The coarse scale is $2^{-C_0} R \gg R^\frac12$, this being the size of the smaller cubes
in $\calQ_{C_0}(Q)$ and the subject of the claims in the Proposition. Then there is the fine scale $r=2^{-j}R$ chosen such that
 $r \approx R^\frac12$. Notice that $r$ is the proper scale for wave packets corresponding to time scales $R$ and also
 that their scale is $c^{-2} r \ll 2^{-C_0} R$, the last one being the scale of cubes in $\calQ_{C_0}(Q)$. 
 
We use Lemma \ref{LeWP} with $J=j$ to construct the wave packet decomposition for $\phi_1$:
\[
\phi_1 = \sum_{T_1 \in \calT_1} \phi_{1,T_1}. 
\]
For any $q_0 \in \calQ_{C_0}(Q)$ and $T_1 \in \calT_1$ we define
\[
m_{q_0,T_1}:= \|  \tilde{\chi}_{T_1} \phi_2 \|^2_{L^2(q_0)}
\]
and 
\[
m_{T_1}:= \sum_{q_0 \in \calQ_{C_0}(Q)} m_{q_0, T_1}.
\]
Based on this we define
\begin{equation} \label{dPq0}
\Phi^{(q_0)}_1:= \sum_{T_1} \frac{m_{q_0,T_1}}{m_{T_1}} \phi_{1,T_1}. 
\end{equation}

By combing the definitions above with the decomposition property \eqref{lind}, we obtain 
\[
\phi_1= \sum_{q_0 \in \calQ_{C_0}(Q)} \Phi_1^{(q_0)}
\]
thus justifying \eqref{dec}. 

The margin estimate \eqref{MPhi2} follows from the margin estimate on tubes provided by Lemma \ref{LeWP}. 
The coefficients $m_{q_0,T_1}$ satisfy \eqref{q0e}, thus the estimate \eqref{PhiM2} follows from 
 \eqref{q00e}. 

All that is left to prove is \eqref{Phia2}, which is equivalent to 
\begin{equation} \label{red1}
\left( \sum_{q \in \calQ_j(Q):d(q,q_0) \ges cR} \| \Phi_1^{(q_0)} \Pi_{i=2}^k \phi_i \|^{\frac2{k-1}}_{L^{\frac2{k-1}}(q)}\right)^{\frac{k-1}2} 
\ls c^{-C} r^{-\frac{n-k+1}2} \Pi_{i=1}^k M(\phi_i).
\end{equation}

Note that the cubes $q$ are selected at the finer scale dictated the size of cubes in $\calQ_j(Q)$. In the definition of 
$\Phi_1^{(q_0)}$, see \eqref{dPq0}, we have the full family $\calT_1$. In the above estimate, we estimate the output inside $q$, 
thus, in light of  \eqref{ld}, the terms $\phi_{T_1}$ with $T_1 \cap q \ne \emptyset$ are the ones that really matter. Indeed, if we split
\[
\Phi^{(q_0)}_1:= \sum_{ T_1 \cap q \ne \emptyset} \frac{m_{q_0,T_1}}{m_{T_1}} \phi_{1,T_1}  
+ \sum_{k \in \N} \sum_{d(T_1,q) \approx 2^k c^{-2} r} \frac{m_{q_0,T_1}}{m_{T_1}} \phi_{1,T_1}, 
\]
we can use \eqref{triangle} to reduce the problem to estimating each term above in the first sum. Indeed, in light of \eqref{qest}, the contributions of terms from the second sum come with additional decay $2^{-kN}$, which, for $N$ large enough, can be easily estimated. Thus it suffices to prove the estimate \eqref{red1} with $\Phi^{(q_0)}_1$ replaced by the first sum above.

For fixed $q$, it is a straightforward exercise to check that Setup \ref{C} in Section \ref{LocME} is satisfied: simply let $J= \{T_1 \in \calT_1: T_1 \cap q \neq \emptyset\}$ 
and let $\phi_{T_1}= \calE_{1,T_1} f_{1,T_1}$. Thus
we can  invoke \eqref{MECloc}  to obtain
\[
\begin{split}
 \| \left( \sum_{T_1 \cap q \ne \emptyset} \frac{m_{q_0,T_1}}{m_{T_1}} \phi_{T_1}  \right)  \Pi_{i=2}^k \phi_i  \|_{L^{\frac2{k-1}}(q)}
 \ls C(\epsilon) r^{-\frac{n+1}2} r^\epsilon \sum_{T_1 \cap q \ne \emptyset}  \frac{m_{q_0,T_1}}{m_{T_1}} \| \tilde \chi_q \phi_{1,T_1} \|_{L^2}  \Pi_{i=2}^{k} \| \tilde \chi_q \phi_i \|_{L^2}.
\end{split}
\]
Since $m_{q_0,T_1} \leq m_{T_1}$, then $\frac{m_{q_0,T_1}}{m_{T_1}} \leq \frac{m^\frac12_{q_0,T_1}}{m^\frac12_{T_1}}$; from this we obtain:
\[
\begin{split}
&  \sum_{ T_1 \cap q \ne \emptyset}  \frac{m_{q_0,T_1}}{m_{T_1}} \| \phi_{1,T_1} \tilde \chi_q \|_{L^2}\\
 \ls & 
\left( \sum_{T_1 \cap q \ne \emptyset} \frac{\|\phi_{1,T_1} \tilde\chi_q \|^2_{L^2}}{m_{T_1} \tilde \chi_{T_1}(x_q,t_q)} \right)^\frac12 
 \left(  \sum_{T_1 \cap q \ne \emptyset}  m_{q_0,T} \tilde \chi_{T_1}(x_q,t_q)  \right)^\frac12.
\end{split}
\]
Next we claim the following estimate
\begin{equation} \label{keyw}
\begin{split}
\sum_{T_1 \in \calT_1} m_{q_0,T_1} \tilde \chi_{T_1}(x_q,t_q) & \ls  \| \tilde \chi_{S(q)} \phi_2 \|_{L^2}^2. 
\end{split}
\end{equation}
Using the definition of $m_{q_0,T_1}$ we identify  the function 
\[
\tilde \chi_{S(q)}= ( \sum_{T_1 \in \calT_1} \tilde \chi(x_q,t_q) \tilde \chi_{T_1} ) \chi_{q_0}
\]
which makes  \eqref{keyw} hold true. Here the surface $S(q)$ is the translate by $c(q)$ of the neighborhood of size $r$ of cone of normals at $S_1$, which we denote by $\mathcal{CN}_1:=\{\alpha N_1(\zeta), \zeta \in S_1, \alpha \in \R \}$.  It is important to note that we do not consider the whole cone but only the part with $cR \leq \alpha \ls R$. 
$\tilde \chi_{S(q)}$ has the following decay property:
\[
\tilde \chi_{S(q)} (x,t) \ls c^{-4} \left( 1+ \frac{d((x,t), S(q))}{c^{-2}r}\right)^{-N}.
\]
This is a consequence of the fact that the tubes $T_1$ passing thorough $q$ separate inside $q_0$ and of the separation between $q$ and $q_0$, which is quantified by $d(q,q_0) \ges cR$. Quantitatively speaking, given a point in $q_0$ close to $S(q)$, there are $\ls c^{-4}$ tubes $T_1$ passing through the point and $q$ - this follows from the dispersion estimate \eqref{disp} and 
the geometry of the family of tubes $\calT_1$.

We define
\[
\begin{split}
& A(q)= \left( \sum_{T_1 \cap q \ne \emptyset} \frac{\|\phi_{1,T_1} \tilde\chi_q \|^2_{L^2}}{m_{T_1} \tilde \chi_{T_1}(x_q,t_q)} \right)^\frac12, \quad B(q)=   \| \tilde \chi_{S(q)} \phi_2 \|_{L^2} \\
& C(q)= \| \tilde \chi_q \phi_2 \|, \quad D(q)= \Pi_{i=2}^{k} \| \tilde \chi_q \phi_i \|_{L^2}.
\end{split}
\]
To conclude with the proof of \eqref{red1}, it suffices to show 
\[
\left( \sum_{q \in \calQ_j(Q):d(q,q_0) \ges cR} A(q)^\frac{2}{k-1} B(q)^\frac2{k-1} C(q)^\frac2{k-1} D(q)^\frac2{k-1} \right)^\frac{k-1}2 
\les r^{\frac{k}2} r^\epsilon \Pi_{i=1}^k M(\phi_i)^\frac12.
\]
This will be a consequence of the following two inequalities
\begin{equation} \label{mes2}
\left( \sum_{q \in \calQ_j(Q):d(q,q_0) \ges cR} A(q)^2 B(q)^2  \right)^\frac12 
\les r^{\frac{1}2}  M(\phi_1)^\frac12,
\end{equation}
and
\begin{equation} \label{mes3}
\left( \sum_{q \in \calQ_j(Q):d(q,q_0) \ges cR} C(q)^\frac2{k-2} D(q)^\frac2{k-2} \right)^\frac{k-2}2 
\les r^{\frac{k-1}2} r^\epsilon \Pi_{i=2}^k M(\phi_i)^\frac12.
\end{equation}
The proof of \eqref{mes2} is similar to the one we used in the bilinear and trilinear theory, see \cite{Be2, Be3}. 
By rearranging the sum, it suffices to show
\[
\sum_{T_1} \sum_{q \cap T_1 \ne \emptyset}   \frac{\|\phi_{1,T_1} \tilde \chi_q \|^2_{L^2} 
\| \phi_2 \tilde \chi_q \|^2_{L^2}}{m_{T_1} \tilde \chi_{T_1}(x_q,t_q)} \ls r M(\phi_1).
\]
The inner sum is estimated as follows:
\[
\sum_{q \cap T_1 \ne \emptyset}   \frac{ \| \phi_2 \tilde \chi_q \|^2_{L^2}}{m_{T_1} \tilde \chi_{T_1}(x_q,t_q)}
\ls   \frac{ \| \phi_2 \tilde \chi_{T_1} \|^2_{L^2}}{m_{T_1}} \ls 1,
\]
and the outer one is estimated by
\[
\sum_{T_1} \sup_{q} \|\phi_{1,T_1} \tilde \chi_q \|^2_{L^2}  \ls r \sum_{T_1} M(\phi_{1,T_1}) \ls r M(\phi_1),
\]
which is obvious given the size of $q$ in the $x_1$-direction is $\approx r$ and the mass of $\phi_{1,T_1}$
is constant across slices in space with $x_1=constant$. 

In proving \eqref{mes3}, we can take advantage of the fast decay of $\tilde \chi_q$ away from $q$ and of $\tilde \chi_{S(q)}$
away from $S(q)$, and at the cost of picking factors of type $c^{-C}$, it suffices to show
\begin{equation} \label{CK}
\| \| \phi_2 \|_{L^2(S(q))}  \Pi_{i=3}^k\| \phi_i \|_{L^2(q)} \|_{l^{\frac2{k-2}}_q} 
\les r^{\frac{k-1}2} r^\epsilon \Pi_{i=2}^k M(\phi_i).
\end{equation}
The $l^{\frac2{k-2}}_q$ norm is computed over the set of $q \in \calQ_j(Q)$, the set of cubes of size $r$ contained in the larger cube of size $r^2$. 
This estimate is the subject of Theorem \ref{NewRE} in Section \ref{CMS}. The statement of Theorem \ref{NewRE} requires $S$ to have certain properties in relation 
to the other surfaces $S_2,..S_k$, see \bf P1, P2 \rm at the beginning of Section \ref{CMS}. The fact that $S$ satisfies these properties follows from 
Lemma \ref{GL}. 

\end{proof}

\begin{proof}[Proof of Proposition \ref{keyP}] This is entirely similar to the argument used in \cite{Be2} and \cite{Be3}, see the corresponding proofs there.
\end{proof}

We have finished the proof of our main result Theorem \ref{mainT}. Obviously we owe a justification for some estimates used in the body of the proof of Proposition \ref{NLP} and
this what will be covered in the next two Sections of the paper. 

\section{Second part: the multilinear estimate revised} \label{SePa}
We have arrived at the middle point in this article. In the first half, Sections \ref{intr} through \ref{SI}  we have proved the main result, Theorem \ref{mainT}. In the second part, and Sections \ref{LocME} and \ref{CMS}, we provide some of the supporting details used in the proof of Theorem \ref{mainT}. However we think that these are not just technical results, but they may be of independent interest. 

We point out a major difference between the hypothesis used in the two parts. For Theorem \ref{mainT} we assume the particular 
foliation structure and curvature condition described by conditions i)-iii). In the second part, Sections \ref{LocME} and \ref{CMS}, 
we provide results in a general setup which we describe bellow. 

We are given $k$ smooth hypersurfaces $S_i = \Sigma_i(U_i)$ with smooth parameterizations 
$\Sigma_i$. These should be seen as new surfaces, different than the ones for which Theorem \ref{mainT} states a result; 
most important difference is that 
$S_i, i=1,..,k$ used here are generic, in other words they are not assumed to have a foliation structure, nor curvature properties as the surfaces in our main result, Theorem \ref{mainT}.

We assume the transversality condition: there exists $\nu >0$ such that
\begin{equation} \label{normal2}
vol (N_1(\zeta_1), .., N_{k}(\zeta_{k})) \geq \nu,
\end{equation}
for all choices $\zeta_i \in \Sigma_i(U_i)$. Here by $vol (N_1(\zeta_1), .., N_{k}(\zeta_{k}))$ we mean the volume of the $k$-dimensional parallelepiped spanned by the vectors $N_1(\zeta_1), .., N_{k}(\zeta_{k})$. 

Each of these (parametrization of) hypersurfaces generates the corresponding $\calE_i$ operator:
\[
\calE_i f(x) = \int_{U_i} e^{i x \cdot \Sigma_i(\xi)} f(\xi) d\xi.
\]

\section{The multilinear estimate: localization and superposition} \label{LocME}
In this section we provide the proof of a localized version of the multilinear estimate. The motivation comes from the argument in the previous section. The proofs build on the ideas introduced in \cite{Be1}
and later refined in \cite{Be2}.

We work under the setup described in Section \ref{SePa}. Given $N_{k+1},..,N_{n+1}$ unit vectors, we introduce the following transversality condition: there exists $\nu > 0$ such that
\begin{equation} \label{normal}
| det(N_1(\zeta_1), .., N_{k}(\zeta_{k}), N_{k+1},.., N_{n+1})| \geq \nu
\end{equation}
for all choices $\zeta_i \in \Sigma_i(U_i)$. 

Assume that $\Sigma_1 (supp f_1) \subset  B(\calH_1,\mu)$, where $B(\calH_1,\mu)$ is the neighborhood of size $\mu$ of the $k$-dimensional affine subspace $\calH_1$. Assume that $|N_1(\zeta_1)- \pi_{\calH_1} N_1(\zeta_1)| \les \mu, \forall \zeta_1 \in \Sigma_1 (supp f_1)$,
where $\pi_{\calH_1} : \R^{n+1} \rightarrow \calH_1$ is the projection onto $\calH_1$. In addition assume that if $N_{i}, i=k,..,n+1$ is a basis of the normal space $\calH^\perp_1$ to $\calH_1$, then $N_1(\zeta_1),.., N_k(\zeta_k), N_{k+1},..,N_{n+1}$ are transversal in the sense \eqref{normal}. Under these hypothesis we proved in  \cite[Theorem 1.3]{Be2} that
\begin{equation} \label{AMRE}
\| \Pi_{i=1}^k \calE_i  f_i  \|_{L^\frac2{k-1}(B(0,r))} \leq C(\epsilon) \mu^{\frac{n-k+1}2} r^\epsilon 
\Pi_{i=1}^k \| f_{i} \|_{L^2(U_{i})} .
\end{equation}

The multilinear estimate \eqref{AMRE} is a statement about the product of some functions in 
$L^\frac{2}{k-1}$. It is very natural to ask how does this estimate behaves with respect to superpositions of one factor, that is replacing $f_1$ by $\sum_{\alpha} f_{1,\alpha}$.  If $\frac{2}{k-1} \geq 1$, then the triangle inequality holds true in $L^\frac{2}{k-1}$ and the answer is simple: in a sublinear fashion. If $\frac{2}{k-1} <1$, the triangle inequality fails in  $L^\frac{2}{k-1}$ and the sublinearity cannot be argued in the same way. However 
\[
\begin{split}
\|  \calE_1 ( \sum_\alpha f_{1,\alpha})  \Pi_{i=2}^{k} \calE_i f_i \|_{L^\frac{2}{k-1}(B(0,R))} & \leq C(\epsilon) R^\epsilon \|  \sum_\alpha f_{1,\alpha} \|_{L^2} \Pi_{i=2}^{k} \| f_i \|_{L^2(U_i)} \\
& \leq C(\epsilon) R^\epsilon \sum_\alpha \|   f_{1,\alpha} \|_{L^2}  \Pi_{i=2}^{k} \| f_i \|_{L^2(U_i)}
\end{split}
\]
and this indicates again sublinear behavior with respect to superpositions of one input. In the above the set of indexes $\alpha$ is taken to be of finite cardinality 
(to avoid unnecessary distractions) and the key point is that the estimate is independent of the cardinality of this set.

The main question is whether the sublinearity aspect of the estimate holds true for the refinement \eqref{AMRE} of the multilinear estimate. An a posteriori argument as above will fail to give the optimal result when each term $f_{1,\alpha}$ has good localization properties, but $\sum_{\alpha} f_{1,\alpha}$ does not have such localization properties. 

\begin{set} \label{C}
We are given $J$, a finite set, and open, bounded and connected sets $U_{1,\alpha} \subset \calH_{1,\alpha}, \forall \alpha \in J$, where $\calH_{1,\alpha}$ are affine hyperplanes. 
For each $\alpha \in J$ we assume the following: there are $k$-dimensional hyperplanes $\calH'_{1,\alpha}$ with the property that $S_{1,\alpha} = \Sigma_{1,\alpha}(U_{1,\alpha}) \subset B( \calH'_{1,\alpha},\mu)$, the neighborhood of size $\mu$ of $\calH'_{1,\alpha}$. The following property holds $|N_1(\zeta_1)- \pi_{\calH'_{1,\alpha}} N_1(\zeta_1)| \les \mu, \forall \zeta_1 \in S_{1,\alpha}$, where $\pi_{\calH'_{1,\alpha}} : \R^{n+1} \rightarrow \calH'_{1,\alpha}$ is the projection onto $\calH'_{1,\alpha}$.
Let $\tilde \calH_{1,\alpha}= \calH_{1,\alpha} \cap \calH'_{1,\alpha}$ be the $k-1$-dimensional affine subspace $\tilde \calH_{1,\alpha} \subset \calH_{1,\alpha}$; we also assume that $U_{1,\alpha} \subset B( \tilde \calH_{1,\alpha},\mu)$.  

We assume that $S_{1,\alpha} \subset S_1=\Sigma_1(U_1), \forall \alpha \in J$, and  $S_1$ satisfies the global property:
there is an orthonormal set of vectors $N_{i}, i=k,..,n+1$ such that \eqref{normal} is satisfied. 

For each $\alpha \in J$, we assume that if $N_{i}, i=k+1,..,n+1$ is a basis of the normal space 
$\tilde \calH^\perp_{1,\alpha} \subset \calH_{1,\alpha}$, then $N_1(\zeta_1),.., N_k(\zeta_k), N_{k+1},..,N_{n+1}$ are transversal in the sense \eqref{normal}. 
 \end{set}

For each $\alpha \in J$ we define
\[
\calE_{1,\alpha} f(x) = \int_{U_{1,\alpha}} e^{i x \cdot \Sigma_{1,\alpha}(\xi)} f(\xi) d\xi. 
\]
Without restricting the generality of the problem, we can assume that $S_{1,\alpha}$ are of graph type, that is 
 $\Sigma_{1,\alpha}(\xi^\alpha)=(\xi^\alpha,\varphi_{1,\alpha}(\xi^\alpha))$,
where $\xi^\alpha$ is the coordinate in $\calH_{1,\alpha}$. In addition, for each $\alpha$, we pick and fix some $\eta_{1,\alpha} \in U_{1,\alpha}$. 

The next result states how the multilinear estimate behaves with respect to superposition of localized functions.  

\begin{theo} \label{MB2} We assume the Setup \ref{C}. Let $\mu, R > 0$ be such that $R \leq \mu^{-1}$. 
Then for any $\epsilon > 0$, there is $C(\epsilon)$ such that the following holds true
\begin{equation} \label{Lf2}
\begin{split}
& \| (\sum_{\alpha} \calE_{1,\alpha}  f_{1,\alpha}) \Pi_{i=2}^{k} \calE_i f_i \|_{L^\frac{2}{k-1}(B(0,R))} \\
\leq & C(\epsilon) \mu^{\frac{n+1-k}2} R^\epsilon \left( \sum_{\alpha} \| f_{1,\alpha} \|_{L^2(U_{1,\alpha})} \right) \Pi_{i=2}^{k} \| f_i \|_{L^2(U_i)}.
\end{split}
\end{equation}
\end{theo}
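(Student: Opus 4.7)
The plan is to upgrade the single-piece multilinear estimate \eqref{AMRE} from \cite[Theorem 1.3]{Be2} to its superposition version \eqref{Lf2}. Setting $F_\alpha := \calE_{1,\alpha} f_{1,\alpha}$, $F = \sum_\alpha F_\alpha$, and $G := \prod_{i=2}^k \calE_i f_i$, applying \eqref{AMRE} individually to each $(F_\alpha, G)$ followed by $L^p$ quasi-triangle inequality with $p = 2/(k-1)$ yields only
\[
\|F\cdot G\|_{L^p(B(0,R))} \les C(\epsilon)\mu^{\frac{n+1-k}{2}}R^\epsilon \Bigl(\sum_\alpha \|f_{1,\alpha}\|_{L^2}^p\Bigr)^{1/p} \prod_{i \geq 2}\|f_i\|_{L^2},
\]
which for $k \geq 4$ (so that $p < 1$) is strictly weaker than the claimed $\ell^1$ bound. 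The required upgrade must therefore come from quantitative cancellation between distinct pieces $F_\alpha, F_\beta$.

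\textbf{Step 1 (bilinear recasting).} I would first revisit the proof of \eqref{AMRE} from \cite{Be2,Be1}, whose architecture is really a bilinear-in-$F$ estimate: it passes through expressions of the form $|F|^2 \prod_{i\geq 2}|\calE_i f_i|^{q_i}$ integrated on $B(0,R)$, with the $\mu^{n+1-k}$ factor arising from Plancherel in the $(n+1-k)$ directions transverse to $\calH_1'$. Expanding $|F|^2 = \sum_{\alpha,\beta} F_\alpha \bar F_\beta$ inside this machinery exposes all pairs $(\alpha,\beta)$. The diagonal part $\alpha=\beta$ is controlled by \eqref{AMRE} applied piecewise and yields a clean $\sum_\alpha \|f_{1,\alpha}\|_{L^2}^2 \leq (\sum_\alpha \|f_{1,\alpha}\|_{L^2})^2$ contribution, which is exactly the square of the target.

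\textbf{Step 2 (off-diagonal decay).} The remaining task is to estimate, for $\alpha \neq \beta$, the bilinear form generated by $F_\alpha \bar F_\beta$. The Fourier support of $F_\alpha \bar F_\beta$ sits near $S_{1,\alpha} - S_{1,\beta}$, which is separated from the origin by $\approx d(S_{1,\alpha}, S_{1,\beta})$ in the ``flat'' directions normal to $\calH_1'$; combined with the global transversality \eqref{normal} imposed across the whole $S_1$, this should produce a stationary-phase / non-stationary-phase gain at scale $R \leq \mu^{-1}$ that is polynomially decaying in $d(S_{1,\alpha}, S_{1,\beta})$. Once this off-diagonal bound is in place, Schur's test (in the indices $\alpha,\beta$) converts the double sum back into a product of $\ell^1$ sums, producing the desired $\sum_\alpha \|f_{1,\alpha}\|_{L^2}$.

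\textbf{Main obstacle.} The hard step will be Step 2: producing the off-diagonal estimate while simultaneously preserving the sharp $\mu^{(n+1-k)/2}$ factor, losing only $R^\epsilon$, and extracting decay that is summable across all pairs $(\alpha,\beta)$. This requires inserting the separation parameter $d(S_{1,\alpha}, S_{1,\beta})$ into the induction-on-scales / wave-packet argument of \cite{Be1} in a quantitative way — precisely the ``refinement of the techniques developed in \cite{Be1}'' flagged in the introduction. The compatibility between the individual $\mu$-flatness of each $S_{1,\alpha}$ near its own $\calH_{1,\alpha}'$, the common transversality of Setup \ref{C}, and the pairwise Fourier separation is the main technical point to be handled with care.
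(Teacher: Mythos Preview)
Your approach has a genuine gap at Step 2, and the overall strategy is on the wrong track.

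The off-diagonal decay you are banking on is simply not available under Setup \ref{C}. Nothing in the setup forces any separation between $S_{1,\alpha}$ and $S_{1,\beta}$; two indices $\alpha\neq\beta$ may well parametrize the \emph{same} piece of $S_1$ (indeed, in the application in Proposition \ref{NLP} one takes $J=\{T_1:T_1\cap q\neq\emptyset\}$, and many tubes share the same Fourier leaf $S(\xi_T)$ while differing only in their spatial translate $x_T$). For such pairs $d(S_{1,\alpha},S_{1,\beta})=0$, there is no Fourier separation, no stationary-phase gain, and your Schur test has nothing to sum. More broadly, the paper explicitly proves the estimate with $\sum_\alpha|\calE_{1,\alpha}f_{1,\alpha}|$ in place of $|\sum_\alpha\calE_{1,\alpha}f_{1,\alpha}|$, so no cancellation between pieces is used---or can be used.

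The paper's mechanism for getting the $\ell^1$ sum over $\alpha$ despite $p<1$ is completely different. One runs an induction on scales on the quantity with absolute values, and at each scale one proves a \emph{strengthened} estimate \eqref{INS3} in which every factor is replaced by a weighted $\ell^2$ sum over localizations (cubes $q'\in\calC\calH_i(R)$ for $i\geq 2$, and strips $\mathfrak{s}^\alpha\in\mathfrak{S}_{1,\alpha}(R)$ for the $\alpha$-pieces). This is obtained not by any bilinear expansion but by the commutator identity \eqref{com}, which converts physical-space decay into gains on the right-hand side while preserving the $\mu$-thin support of $f_{1,\alpha}$ (this is why strips, not cubes, are used for the first factor). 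The $p<1$ summation is then handled via \eqref{triangle} and \eqref{ab}, together with the cardinality bound $|A^\alpha(q,d)|\approx d^{k-1}$. Passing from scale $R$ to $\delta^{-1}R$ is closed by the discrete Loomis--Whitney inequality of Lemma \ref{Le2}: one checks that the $\alpha$-sum defining $g_1$ lies in $l^\infty_{z_1,z_{k+1},\dots,z_{n+1}}l^2_{z_2,\dots,z_k}$ with norm $\lesssim\sum_\alpha\|f_{1,\alpha}\|_{L^2}$---and here the $\ell^1$ sum appears simply by the triangle inequality inside a genuine $\ell^2$ norm, with no tension against $p<1$.
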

In Section \ref{SI} we have used the following consequence of the above Theorem: 
\begin{corol} \label{MBcor} We assume the Setup \ref{C}. Assume that $\mu \approx r^{-1}$ and $q$ is a cube of size $\approx r$.
Then for any $\epsilon > 0$, there is $C(\epsilon)$ such that the following holds true
\begin{equation} \label{MECloc}
\begin{split}
& \| (\sum_{\alpha} \calE_{1,\alpha}  f_{1,\alpha}) \Pi_{i=2}^{k} \calE_i f_i \|_{L^\frac{2}{k-1}(q)} \\
\leq & C(\epsilon) r^{-\frac{n+1}2} r^\epsilon \left( \sum_{\alpha} \| \tilde \chi_q \calE_{1,\alpha} f_{1,\alpha} \|_{L^2} \right) 
\Pi_{i=2}^{k} \| \tilde \chi_q \calE_i f_i \|_{L^2}.
\end{split}
\end{equation}
\end{corol}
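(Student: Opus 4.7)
The plan is to reduce Corollary~\ref{MBcor} to Theorem~\ref{MB2} applied at scale $R = r$. This application is admissible, since the constraint $R\leq\mu^{-1}$ holds with equality under $\mu\approx r^{-1}$, and it produces a prefactor $\mu^{(n+1-k)/2}r^\epsilon = r^{-(n+1-k)/2}r^\epsilon$. To upgrade to the target $r^{-(n+1)/2}r^\epsilon$, one needs an additional $r^{-k/2}$, which will be obtained by spending $r^{-1/2}$ on each of the $k$ factors via the replacement $\|f\|_{L^2}\mapsto r^{-1/2}\|\tilde\chi_q \calE f\|_{L^2}$.

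The replacement is executed by a Fourier-analytic localization. Fix a smooth cutoff $\chi_q\les\tilde\chi_q$ that equals $1$ on $q$ with spacetime Fourier support in $B(0,r^{-1})$ (this matches the paper's own $\chi_q$). Let $G_i$ denote the spacetime Fourier transform of $\chi_q\calE_i f_i$, a function supported in an $r^{-1}$-neighborhood of $S_i$, and define
\[
\tilde f_i(\xi) \;=\; \int_{|\delta|\les r^{-1}} G_i\bigl(\xi,\varphi_i(\xi)+\delta\bigr)\,d\delta,
\]
with $\tilde f_{1,\alpha}$ defined analogously from $\chi_q\calE_{1,\alpha}f_{1,\alpha}$. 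Cauchy--Schwarz in the one-dimensional variable $\delta$, followed by the change of variables $\tau=\varphi_i(\xi)+\delta$ and Plancherel, gives
\[
\|\tilde f_i\|_{L^2}^2 \;\les\; r^{-1}\|G_i\|_{L^2}^2 \;=\; r^{-1}\|\chi_q\calE_i f_i\|_{L^2}^2 \;\les\; r^{-1}\|\tilde\chi_q\calE_i f_i\|_{L^2}^2,
\]
which is the desired $r^{-1/2}$ gain per factor.

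The main obstacle is to close the loop: a direct computation produces
\[
\chi_q\calE_i f_i(x,t) - \calE_i\tilde f_i(x,t) \;=\; \int\!\int G_i(\xi,\varphi_i(\xi)+\delta)\bigl(e^{it\delta}-1\bigr)e^{i(x\xi+t\varphi_i(\xi))}\,d\xi\,d\delta,
\]
and for $(x,t)\in q$ one has $|t\delta|\les 1$ but not small, so the naive cancellation fails. The fix is to subdivide $|\delta|\les r^{-1}$ into disjoint cells $\{D_{j}\}$ of length $r^{-1-\gamma}$ for a small $\gamma=\gamma(\epsilon)>0$; on each cell the phase $e^{it\delta}$ is constant up to error $r^{-\gamma}$, yielding a decomposition $\chi_q\calE_i f_i = \sum_{j}e^{it\delta_{j}}\calE_i^{(\delta_{j})}\tilde f_i^{\,j}+O(r^{-\gamma})$, where $\calE_i^{(\delta_{j})}$ is the extension operator for the translated surface $\tau=\varphi_i(\xi)+\delta_{j}$ (which still fulfils Setup~\ref{C} with constants adjusted by $O(1)$), and $\sum_{j}\|\tilde f_i^{\,j}\|_{L^2}^2 = \|\tilde f_i\|_{L^2}^2$ by disjointness in $\delta$. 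Expanding the product, applying Theorem~\ref{MB2} to each combination of pieces on a ball $B(c(q),Cr)\supset q$, using $\|\sum g\|_p^p\leq\sum\|g\|_p^p$ (valid since $p=\tfrac{2}{k-1}\leq 1$ for $k\geq 3$) together with Cauchy--Schwarz to recombine the sums over cells, costs only $r^{O(\gamma k^2)}$; absorbing this and the residual $O(r^{-\gamma})$ errors, and then choosing $\gamma=\gamma(\epsilon)$ small, completes the proof with the stated prefactor $r^{-(n+1)/2}r^\epsilon$.
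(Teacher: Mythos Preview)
Your approach differs from the paper's. The paper does not invoke Theorem~\ref{MB2} as a black box; instead it reruns the refined localized estimate \eqref{INS3} on each translated hyperplane $\calH_i + y_i N_i$ (replacing strips by cubes for the $f_{1,\alpha}$ terms, which is admissible here because $\mu\approx r^{-1}$, so the cube cutoff only blurs the support by a further $O(r^{-1})=O(\mu)$), and then averages in $y_1,\dots,y_k$ over a range of length $\approx r$ to convert the slice norms $\|\cdot\|_{L^2(\calH_i+y_iN_i)}$ into the spacetime norms $\|\tilde\chi_q\,\calE_i f_i\|_{L^2}$. This sidesteps any shell decomposition and the error bookkeeping that comes with it. Your route --- foliating $\chi_q\,\calE_i f_i$ into extensions from the shifted graphs $\tau=\varphi_i(\xi)+\delta_j$ and applying Theorem~\ref{MB2} termwise --- is more modular, and the recombination you sketch (quasi-triangle inequality in $L^{2/(k-1)}$ over the $j_2,\dots,j_k$ sums, Cauchy--Schwarz over $j_1$, at total cost $r^{O(\gamma k^2)}$) is correct.

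The genuine gap is the treatment of the remainder. Writing it as ``$O(r^{-\gamma})$'' and then ``absorbing'' it does not close: a pointwise bound on the remainder, fed through $|q|^{(k-1)/2}=r^{(n+1)(k-1)/2}$, loses a large positive power of $r$ against the target $r^{-(n+1)/2}$, so the error term must itself be estimated via Theorem~\ref{MB2}. The standard repair is to Taylor-expand within each cell: after translating so that $c(q)=0$ one has $|t(\delta-\delta_j)|\les r^{-\gamma}$ on $q$, hence
\[
e^{it\delta}=e^{it\delta_j}\sum_{m\ge 0}\frac{(it)^m(\delta-\delta_j)^m}{m!},
\]
and each term is $t^m$ times an extension operator applied to $\tilde f_i^{\,j,m}(\xi)=\int_{D_j}G_i(\xi,\varphi_i(\xi)+\delta)(\delta-\delta_j)^m\,d\delta$, with $\|\tilde f_i^{\,j,m}\|_{L^2}\les r^{-(1+\gamma)(m+1/2)}\|G_i\|_{L^2(\{\delta\in D_j\})}$. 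Since $|t|^m\le r^m$ on $q$, the $m$-sum converges geometrically (ratio $\les r^{-\gamma}/m$) with the desired $r^{-1/2}$ gain per factor, and Theorem~\ref{MB2} applies term by term exactly as to your main pieces. With this amendment your argument goes through.
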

We note that the apparent gain of a factor of $r^{-\frac{k}2}$ in this Corollary over the result in Theorem \ref{MB2} has to do with 
replacing $\| f_i \|_{L^2(U_i)}$ by $\| \tilde \chi_q \calE_i f_i \|_{L^2}$. 

The result of the Corollary is not an immediate consequence of the Theorem \ref{MB2}; but it will follow easily from the arguments 
used in the proof of Theorem \ref{MB2}.

The plan is the following: we introduce some notation specific to this section and then we proceed with the proof of the above two results. 

\subsection{Notation} \label{NOT} 
 
Assume $\calH_1 \subset \R^{n+1}$ is a hyperplane passing through the origin. Let $N_1$ be its normal and let $\pi_{N_1}: \R^{n+1} \rightarrow \calH_1$
the associated projection along the normal $N_1$. We denote by $\mathcal{F}_1: \calH_1 \rightarrow \calH_1$ the Fourier transform and by $\mathcal{F}_1^{-1}$ the inverse Fourier transform. We denote the variables in $\R^{n+1}$ by $x=(x_1,x')$, where $x_1$ is the coordinate along $N_1$ and $x'$ is the coordinate along $\calH_1$.
 We denote by $\xi'$ the Fourier variable corresponding to $x'$. For $f: U_1 \subset \calH_1 \rightarrow \C$, $f \in L^2(U_1)$ operator $\calE_1$ 
 takes the form 
\begin{equation} \label{E1}
\calE_1 f (x)= \int_{U_1} e^{i (x' \xi' + x_1 \varphi_1(\xi'))} f(\xi') d\xi'.
\end{equation}
 We define the differential operator $\nabla \varphi_1(\frac{D'}i)$ to be the operator with symbol $\nabla \varphi_1(\xi')$. 
 The following  commutator estimate holds true
\begin{equation} \label{com}
(x'-x'_0-x_1 \nabla \varphi_1(\frac{D'}i))^N \calE_1 f =  \calE_1 (\mathcal{F}_1( (x'-x'_0)^N  \mathcal{F}_1^{-1} f)), \quad \forall N \in \N. 
\end{equation}
This is a direct computation using \eqref{E1} and it suffices to check it for $N=1$. 
The role of \eqref{com} will be to quantify localization properties of $\mathcal{F}_1^{-1} f$ on hyperplanes with $x_1=constant$.

We are given $\calH_i, i=1,..,k$ , reference hyperplanes that are used in defining $\calE_i f_i, i=1,..,k$. 
Their normals are denoted by $N_i, i=1,..,k$, respectively.  Note that since $S_{1,\alpha} \subset S_1, \forall \alpha \in J$, it follows that $N_1$
is transversal to all $\calH_{1,\alpha}$. We then pick unit vectors $N_{k+1},..,N_{n+1}$ such that \eqref{normal} is satisfied. 

We construct $\mathcal{L}:=\{ z_1 N_1 + ... +z_{n+1} N_{n+1}: (z_1,..,z_{n+1}) \in \Z^{n+1} \}$ to be the oblique lattice in $\R^{n+1}$ generated by the unit vectors $N_1,..,N_{n+1}$. In each $\calH_i, i=2,..,k$ we construct the induced lattice $\mathcal{L}(\calH_i)=\pi_{N_i} (\mathcal{L})$; this is a lattice since the projection is taken along a direction of the original lattice $\mathcal{L}$. 

Given $r > 0$ we define $\calC(r)$ to be the set of of parallelepipeds of size $r$ in $\R^{n+1}$ relative to the lattice $\mathcal{L}$; a parallelepiped in $\calC(r)$ has the following form $q(\bj): = [r (j_1-\frac12), r(j_1+\frac12)] \times .. \times [r (j_{n+1}-\frac12), r(j_{n+1}+\frac12)]$
where $\bj=(j_1,..,j_{n+1}) \in \Z^{n+1}$. For such a parallelepiped we define $c(q)=r \bj=(r j_1,.., r j_{n+1}) \in r\mathcal{L}$ to be its center.
For each $i=2,..,k$, we let $\calC\calH_i(r) =\pi_{N_i} \calC(r)$ be the set of parallelepipeds of size $r$ in the hyperplane $\calH_i$.
 Given two parallelepipeds $q,q' \in \calC(r)$ or $\calC \calH_i(r)$ we define
$d(q,q')$ to be the distance between them when considered as subsets of the underlying space, let it be $\R^{n+1}$ or $\calH_i$.  

For each $i \in\{ 2,..,k \}$, $r> 0$ we define the linear operator $\mathcal{T}_i : \calH_i \rightarrow \calH_i$
to be the operator that takes $\mathcal{L}(\calH_i)$ to the standard lattice $\Z^{n}$ in $\calH_i$. Then for each $q \in \calC \calH_i(r)$, define
$\chi_q: \calH_i \rightarrow \R$ by
\[
\chi_{q}(x) = \eta_0 (\mathcal{T}_i(\frac{x-c(q)}r))
\]
Notice that $\mathcal{F}_i \chi_{q}$ has Fourier support in the ball of radius $\les r^{-1}$. By the Poisson summation formula and properties of $\eta_0$, 
\begin{equation} \label{pois}
\sum_{q \in \calC \calH_i(r)} \chi_{q}=1.
\end{equation}
Using the properties of $\chi_q$, a direct exercise shows that for each $N \in \N$, the following holds true
\begin{equation} \label{SN}
\sum_{q \in \calC \calH_i(r)}  \| \la \frac{x-c(q)}r \ra^{N} \chi_{q} g \|_{L^2}^2 \les_N \| g \|_{L^2}^2
\end{equation}
for any $g \in L^2(\calH_i)$. Here, the variable $x$ is the argument of $g$ and belongs to $\calH_i$.

Next we turn our attention to similar objects corresponding to the more complex family indexed by $\alpha \in J$. Given $\tilde  \calH_{1,\alpha} \subset \calH_{1,\alpha}$ a subspace of dimension $k-1$, we let $\tilde \pi_\alpha : \calH_{1,\alpha} \rightarrow \tilde  \calH_{1,\alpha}$ be orthogonal projector onto $\tilde  \calH_{1,\alpha}$.  We denote by $(\tilde \calH_{1,\alpha})^\perp$ the normal subspace to $\calH_{1,\alpha}$. 

 We let $\pi_{N_{1,\alpha}}: \R^{n+1} \rightarrow \calH_{1,\alpha}$ be the projector onto $\calH_{1,\alpha}$ 
 and $\tilde \pi_{1,\alpha} :=  \tilde \pi_\alpha \circ \pi_{N_{1,\alpha}} : \R^{n+1} \rightarrow \tilde \calH_{1,\alpha}$ be the projector onto $\tilde \calH_{1,\alpha}$. We define the latices $\mathcal{L}(\calH_{1,\alpha})=\Z^n$ inside $\calH_{1,\alpha}$ and 
 $\mathcal{L}(\tilde \calH_{1,\alpha})=\Z^{k-1}$
inside $\tilde \calH_{1,\alpha}$ with respect to orthonormal basis in each case; they are constructed such that $\tilde \pi_\alpha (\mathcal{L}(\calH_{1,\alpha}))=\mathcal{L}(\tilde \calH_{1,\alpha})$ - this holds true if the orthonormal basis in $\tilde \calH_{1,\alpha}$ is a subset of the orthonormal basis in $\calH_{1,\alpha}$.

Inside the subspace $\calH_{1,\alpha}$ we construct $\calC_{1,\alpha}(r)$ to be the set of cubes of size $r$ centered at points from the lattice $r \mathcal{L}(\calH_{1,\alpha})$ and sides parallel to the directions of the lattice. Inside the subspace $\tilde \calH_{1,\alpha}$ we construct $\tilde \calC_{1,\alpha}(r)$ be the set of cubes of size $r$ centered at points from the lattice $r \mathcal{L}(\tilde \calH_{1,\alpha})$ and sides parallel to the directions of the lattice. Therefore $\tilde \calC_{1,\alpha}(r)=\tilde \pi^\alpha \calC_{1,\alpha}(r)$. 
Then we define $ \mathfrak{S}_{1,\alpha}(r)$ to be the set of infinite cubical strips
$\mathfrak{s}=q \times (\tilde \calH_{1,\alpha})^\perp \subset \calH_{1,\alpha}$, where $q \in \tilde \calC_{1,\alpha}(r)$. We denote
by $c(\mathfrak{s}):=c(q) \subset r \mathcal{L}(\tilde \calH_{1,\alpha})$, the center of the strip.  We note that given $q_1, q_2 \in \calC_{1,\alpha}(r)$, they belong to the same cubical strip in 
$\mathfrak{S}_{1,\alpha}(r)$ if and only if $\tilde \pi_\alpha q_1=\tilde \pi_\alpha q_2$. For $q \in \calC_{1,\alpha}(r)$, we let $\mathfrak{s} (\tilde \pi_\alpha q)$ be the infinite cubical strip it belongs to as a subset in $\mathfrak{S}_{1,\alpha}(r)$. Given a strip $\mathfrak{s} \in \mathfrak{S}_{1,\alpha}(r)$ we define $\chi_{\mathfrak{s} }: \calH_{1,\alpha} \rightarrow \R$
\[
\chi_{\mathfrak{s}}(x) = \eta_0 (\frac{\tilde \pi_{1,\alpha}(x)-c(\mathfrak{s})}{r})
\]
where, by abusing notation, $\eta_0: \R^{k-1} \rightarrow \R$ is entirely similar to the $\eta_0$ introduced in Section \ref{NOT}, expect that it acts on $\R^{k-1}$ instead of $\R^{n}$. A key property of $\chi_{\mathfrak{s}}$ is that it is constant in directions from the subspace 
$(\tilde \calH_{1,\alpha})^\perp$.

One unpleasant feature of the above construction is that the lattice $\mathcal{L}$ does not project exactly into the latices $\mathcal{L}(\tilde \calH_{1,\alpha})$ via $\tilde \pi_{1,\alpha}$; similarly $\calC_\alpha(r)$ does not project well into $\tilde \calC_{1,\alpha}(r)$ via 
$\tilde \pi_{1,\alpha}$. This is an inherent feature of the fact that there are too many subspaces $\tilde \calH_{1,\alpha}$. As a consequence, given $q \in \calC(r)$, it is not necessarily true that $\tilde \pi_{1,\alpha}(q) \in \tilde \calC_{1,\alpha}(r)$; however $\tilde \pi_{1,\alpha}(q)$ intersects a finite number of 
 $q' \in \tilde \calC_{1,\alpha}(r)$. Abusing notation, we define 
 \[
 \mathfrak{s}^\alpha (\tilde \pi_{1,\alpha} (q))=\bigcup_{q' \in \tilde \calC_{1,\alpha}(r): q' \cap \tilde \pi_{1.\alpha} (q) \neq \emptyset} \mathfrak{s}^\alpha (q'),
 \]
 the strip generated by the projection of $q$ onto $\tilde \calH_{1,\alpha}$. 
 
 Recalling that $\mathcal{L}:=\{ z_1 N_1 + ... +z_{n+1} N_{n+1}: (z_1,..,z_{n+1}) \in \Z^{n+1} \}$, we denote the coordinates of a point in the lattice by $(z_1,..,z_{n+1})$
 and define
\[
\| g \|_{l^\infty_{z_1,z_{k+1},..,z_{n+1}} l^2_{z_2,..,z_k}(\mathcal{L})} = sup_{z_1,z_{k+1},..,z_{n+1}} \| g(z_1, \cdot, z_{k+1},..,z_{n+1}) \|_{l^2_{z_2,..,z_k}}
\]
where $\cdot$ stands for the variables $z_2,..,z_k$ with respect to which $l^2$ is computed. 

With this notation in place we have the following result:

\begin{lema} \label{Le2}
Assume $g_1 \in l^\infty_{z_1,z_{k+1},..,z_{n+1}} l^2_{z_2,..,z_k}(\mathcal{L})$ and $g_i \in l^2(\mathcal{L}(\calH_i)), i=2,..,k$. Then the following holds true
\begin{equation} \label{LWd2}
\| g_1(z) \Pi_{i=2}^{k} g_i(\pi_{N_i}(z)) \|_{l^{\frac2{k-1}}(\mathcal{L})} \ls \|g_1\|_{l^\infty_{z_1,z_{k+1},..,z_{n+1}} l^2_{z_2,..,z_k}(\mathcal{L})}  \Pi_{i=2}^{k} \| g_i \|_{l^2(\mathcal{L}(\calH_i))}. 
\end{equation}
\end{lema}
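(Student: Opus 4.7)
The plan is to view the estimate as a discrete Loomis--Whitney inequality in the inner coordinates $(z_2,\ldots,z_k)$, with the extra factor $g_1$ absorbed using its $l^2$ norm in those same coordinates.

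Split the lattice coordinates into outer and inner blocks: write $z=(\zeta,w)$ with $\zeta=(z_1,z_{k+1},\ldots,z_{n+1})\in\Z^{n-k+2}$ and $w=(z_2,\ldots,z_k)\in\Z^{k-1}$. Since $N_i$ is one of the generators of $\mathcal{L}$, the projection $\pi_{N_i}$ kills precisely the $z_i$ component, so for $i\in\{2,\ldots,k\}$ the function $z\mapsto g_i(\pi_{N_i}(z))$ is independent of $z_i$; for fixed $\zeta$ it becomes a function $G_i^\zeta(\bar w_i)$ of $\bar w_i=(z_2,\ldots,\bar z_i,\ldots,z_k)\in\Z^{k-2}$. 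Set also $G_1^\zeta(w):=g_1(\zeta,w)$; by hypothesis $\|G_1^\zeta\|_{l^2_w}\le \|g_1\|_{l^\infty_{z_1,z_{k+1},\ldots,z_{n+1}}l^2_{z_2,\ldots,z_k}}$ uniformly in $\zeta$, and Fubini gives $\sum_\zeta \|G_i^\zeta\|_{l^2_{\bar w_i}}^2=\|g_i\|_{l^2(\mathcal{L}(\calH_i))}^2$ for $i=2,\ldots,k$.

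Raise both sides of \eqref{LWd2} to the power $2/(k-1)$ and, for each fixed $\zeta$, apply H\"older in $w$ with exponents $(k-1,\frac{k-1}{k-2})$ to separate the $G_1^\zeta$ factor from the rest. This yields
\[
\sum_w |G_1^\zeta(w)|^{\frac2{k-1}}\Pi_{i=2}^k |G_i^\zeta(\bar w_i)|^{\frac2{k-1}}
\le \|G_1^\zeta\|_{l^2_w}^{\frac2{k-1}}\Bigl(\sum_w \Pi_{i=2}^k |G_i^\zeta(\bar w_i)|^{\frac2{k-2}}\Bigr)^{\frac{k-2}{k-1}}.
\]
The remaining inner sum is exactly a discrete Loomis--Whitney configuration in $\Z^{k-1}$: there are $k-1$ nonnegative functions $|G_i^\zeta|^{2/(k-2)}$, and each of them omits one distinct coordinate. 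The standard estimate $\sum_{\Z^d}\Pi_{i=1}^d h_i\le \Pi_{i=1}^d \|h_i\|_{l^{d-1}(\Z^{d-1})}$ applied with $d=k-1$ gives $\sum_w\Pi |G_i^\zeta|^{2/(k-2)}\le \Pi \|G_i^\zeta\|_{l^2_{\bar w_i}}^{2/(k-2)}$, and after raising to the power $(k-2)/(k-1)$ the inner sum is controlled by $\|G_1^\zeta\|_{l^2_w}^{2/(k-1)}\Pi_{i=2}^k \|G_i^\zeta\|_{l^2_{\bar w_i}}^{2/(k-1)}$.

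Summing over $\zeta$, pull out the $l^\infty$ bound on $G_1^\zeta$ and apply H\"older in $\zeta$ to the surviving product with $k-1$ equal exponents all equal to $k-1$. Each factor produces $(\sum_\zeta \|G_i^\zeta\|_{l^2_{\bar w_i}}^2)^{1/(k-1)}=\|g_i\|_{l^2(\mathcal{L}(\calH_i))}^{2/(k-1)}$, and raising the resulting inequality to the power $(k-1)/2$ gives \eqref{LWd2}. There is no real obstacle in this argument; the only point worth noting is that the $l^\infty_{\zeta}l^2_w$ norm is precisely what matches the Loomis--Whitney scaling in the inner $(k-1)$-dimensional slice, and using any weaker norm on $g_1$ would force the $l^2$ norms on the $g_i$ to be inflated. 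The argument is uniform in $k\ge 2$ (for $k=2$ there is no Loomis--Whitney step, only the separation of $g_1$ from the single factor $g_2$).
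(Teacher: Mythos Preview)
Your proof is correct and follows essentially the same approach as the paper. The paper observes that each $g_i\circ\pi_{N_i}$ is constant in $z_i$, places it in the mixed space $l^2_{z_1,z_{k+1},\ldots,z_{n+1}}l^2_{z_2,\ldots,z_{i-1}}l^\infty_{z_i}l^2_{z_{i+1},\ldots,z_k}$, and then invokes H\"older in one line; your version simply unpacks this H\"older step by first freezing the outer block $\zeta$, running the discrete Loomis--Whitney inequality (itself iterated H\"older) in the inner block $w$, and then applying H\"older over $\zeta$---the two arguments are the same computation at different levels of granularity.
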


\begin{proof} The function $g_i \circ \pi_{N_i} $ is independent of the $z_i$ variable, therefore it holds true that
$g_i \circ \pi_{N_i} \in l^2_{z_1,z_{k+1},..,z_{n+1}} l^2_{z_2,..,z_{i-1}}   l^\infty_{z_i} l^2_{z_{i+1},..,z_{k}}$ and 
\[
\| g_i \circ \pi_{N_i} \|_{l^2_{z_1,z_{k+1},..,z_{n+1}} l^2_{z_2,..,z_{i-1}}   l^\infty_{z_i} l^2_{z_{i+1},..,z_{k}}} \leq \| g_i \|_{l^2(\mathcal{L}(\calH_i))},
\]
where then norms $l^2_{z_1,z_{k+1},..,z_{n+1}} l^2_{z_2,..,z_{i-1}}   l^\infty_{z_i} l^2_{z_{i+1},..,z_{k}}$ are defined in the standard fashion. 
Then the result is a direct consequence of the H\"older inequality in its discrete version. 

\end{proof}

\subsection{Proofs of the main results} 

\begin{proof}[Proof of Theorem \ref{MB2}] The argument is based on an induction on scales. Given a $0 < \delta \ll 1$, 
we break the surfaces into smaller pieces of diameter $\les \delta$. A result on the smaller scales is converted to a result at the original scale at the cost of a large power of $\delta^{-1}$, which is absorbed into $C(\epsilon)$. Thus, the focus will be on providing a result in the context of surfaces with diameter less than $\delta$.

We run an induction with respect to the size of the cube were estimates are made. We show that passing from an estimate on cubes of size $R$
to estimate on cubes of size $\delta^{-1} R$ can be done by accumulating constants that are independent of $\delta$ and $R$. In implementing this approach, we use a phase-space approach that alters the support of $f_{1,\alpha}, f_2,..,f_k$ by a factor 
 $\approx R^{-\frac12}$ where $R \geq \delta^{-2}$. This is fine with $f_2,..,f_k$
but not with $f_{1,\alpha}, \alpha \in J$ given that their support in some directions is $\mu \ll \delta$. 
This will require extra care. 

We work under the hypothesis that $U_i \subset B_i(0;\delta), i=2,..,k$, where $B_i(0;\delta)$ is the ball in the hyperplane $\calH_i$. For a function 
$f_i: \calH_i \rightarrow \C$ its margin is defined as follows
\begin{equation} \label{defmrg3}
\mbox{margin}^i(f_i) := \mbox{dist}(\mbox{supp} ( f), B_i(0;2\delta)^c), \quad i=2,..,k, 
\end{equation}
where $\mbox{supp} $ is the support of $f_i$.

We work under the hypothesis that $U_{1,\alpha} \subset B'(0;\delta) \times B''(0;\mu)$, where $B'(0;\delta)$ is the ball in the hyperplane 
$\tilde \calH_{1,\alpha}$ centered at the origin and of diameter $\delta$ and $B'(0;\mu)$ is the ball in the hyperplane $(\tilde \calH_{1,\alpha})^\perp$ centered at the origin and of diameter $\mu$. Accordingly, we split
the coordinates in $\calH_{1,\alpha}$ as follows $\xi^\alpha=(\xi^{',\alpha},\xi^{'',\alpha})$ where $\xi^{',\alpha}$ is the coordinate in $\tilde \calH_{1,\alpha}$ and $\xi^{'',\alpha}$ is the coordinate in $(\tilde \calH_{1,\alpha})^\perp$.
For a function 
$f: \calH_{1,\alpha} \rightarrow \R$ its margin is define by
\begin{equation} \label{defmrpg2}
\mbox{margin}^{1,\alpha}(f) := \inf_{\xi^{'',\alpha}} \mbox{dist}(\mbox{supp}_{\xi^{',\alpha}} (f(\cdot,\xi^{'',\alpha})), B'(0;2\delta)^c), 
\end{equation}
where $\mbox{supp}_{\xi^{',\alpha}}$ is the support of $f$ in the $\xi^{',\alpha}$ variable. On the physical side we denote by $x^{', \alpha}, x^{'', \alpha}$ the dual variables to $\xi^{', \alpha}, \xi^{'', \alpha}$, respectively. We complete the system of coordinates to 
$(\xi_1^\alpha, \xi^{', \alpha}, \xi^{'', \alpha})$ and $(x_1^\alpha,x^{', \alpha}, x^{'', \alpha})$, with $\xi_1^\alpha$ being the coordinate
in the direction of $N_{1,\alpha}$, the normal to $\calH_{1,\alpha}$ and $x_1^\alpha$ being the dual coordinate.

Our induction will aim at quantifying the behavior of $A(R)$ defined below. 

\begin{defin} Given $R \geq \delta^{-2}$ we define $A(R)$ to be the best constant for which the estimate
\begin{equation} \label{indh}
\| (\sum_{\alpha} | \calE_{1,\alpha}  f_{1,\alpha} |) \Pi_{i=2}^{k} \calE_i f_i \|_{L^{\frac2{k-1}}(Q)} \leq A(R) \Pi_{i=1}^{k} \| f_i \|_{L^2} 
\end{equation}
holds true for all parallelepipeds $Q \in \calC(R)$, with $f_i$
obeying the margin requirement
\begin{equation} \label{mrpg2}
margin^i(f_i) \geq \delta-R^{-\frac12}, i=2,..k, \qquad margin^{1,\alpha}(f_{1,\alpha}) \geq \delta-R^{-\frac12}, \forall \alpha \in J,
\end{equation}
and $f_{1,\alpha}$ is supported in $B(\tilde \calH_{1,\alpha};\mu) \subset \calH_{1,\alpha}, \forall \alpha \in J$.  
\end{defin}

Note that in \eqref{indh} we use absolute values. This indicates that we do not use any cancellation properties between the components 
$\calE_{1,\alpha}  f_{1,\alpha}$.
However, using the stronger statement with the absolute values plays a crucial role in carrying out the induction argument.

We start with the parallelepiped $Q$ of size $\delta^{-1} R$ centered at the origin. To keep notation compact we define
\[
H= \Pi_{i=2}^{k} \calE_i f_i , \quad G= \Pi_{i=2}^{k} \| f_i \|_{L^2}. 
\]
For each $q \in \calC(R) \cap Q$, the induction hypothesis is the following:
\begin{equation} \label{IO3}
\| (\sum_{\alpha} | \calE_{1,\alpha}  f_{1,\alpha} |) \cdot  H \|_{L^{\frac2{k-1}}(q)} 
\leq A(R)  \left( \sum_{\alpha} \| f_{1,\alpha} \|_{L^2(U_{1,\alpha})} \right) G. 
\end{equation}

We claim the following strengthening of \eqref{IO3}:
\begin{equation} \label{INS3}
\begin{split}
& \| (\sum_{\alpha} | \calE_{1,\alpha}  f_{1,\alpha} |) \cdot H \|_{L^{\frac2{k-1}}(q)} \\
& \les_N A(R) \Pi_{i=2}^{k} 
\left( \sum_{q' \in \calC\calH_i(R)} \la \frac{d(\pi_{N_i} q,q')}R \ra^{-(2N-n^2) } \| \la \frac{x-c(q')}R \ra^{N} \chi_{q'} \mathcal{F}_i^{-1} f_i \|_{L^2}^2 \right)^\frac12 \\
& \cdot  \sum_\alpha  \left( \sum_{\mathfrak{s}^\alpha \in \mathfrak{S}_{1,\alpha}(R)}  \la \frac{d(\tilde \pi_{1,\alpha}(q) , \mathfrak{s}^\alpha)}R \ra^{-(2N -2k)}  \| \la \frac{x^{',\alpha}- c(\mathfrak{s}^\alpha)}R \ra^N \chi_{\mathfrak{s}^\alpha}  \mathcal{F}_{1,\alpha}^{-1} f_{1,\alpha} \|^2_{L^2} \right)^\frac12.
\end{split}
\end{equation}
Similar improvements were provided in \cite{Be1}; in particular the improvement for the terms $f_i, i=2,..,k$ was established as claimed above 
(it can also be derived along similar, but simpler, lines as the arguments we provide below for the $f_{1,\alpha}$ terms). 
The improvement for $f_{1,\alpha}$ was also provided in \cite{Be1} in the case when there is also
one function $f_{1,\alpha}$, that is $J$ contains one element only. Here we provide an argument for general finite sets $J$ and note that the cardinality of $J$ does not impact $A(R)$. 

Therefore, in justifying \eqref{INS3} we focus on the improvement for the $f_{1,\alpha}$ terms only. 
Given $q \in \calC(R) \cap Q$ and $d \in \N$, let 
$A^\alpha(q,d)=\{ \mathfrak{s} \in \mathfrak{S}_{1,\alpha}(r): \la \frac{d(\tilde \pi_{1,\alpha}(q) , \mathfrak{s})}R \ra \approx d \}$. We can modify the sets such that each strip $\mathfrak{s}$ belongs to only one $A^\alpha(q,d)$.

From \eqref{com} we obtain the identity:
\begin{equation} \label{com3}
\begin{split}
& \sum_\alpha \sum_{\mathfrak{s}^\alpha \in A^\alpha(q,d)} |(x^{',\alpha}- c(\mathfrak{s}^\alpha)-x_1^\alpha \nabla_{\xi^{'\alpha}} \varphi_1(\frac{D^\alpha}i)) \calE_{1,\alpha}  \mathcal{F}_{1,\alpha}   \chi_{\mathfrak{s}^\alpha} \mathcal{F}_{1,\alpha}^{-1} f_{1,\alpha}| \\
= & \sum_\alpha \sum_{\mathfrak{s}^\alpha \in A^\alpha(q,d)} | \calE_1^\alpha \mathcal{F}_{1,\alpha}  (x^{',\alpha}- c(\mathfrak{s}^\alpha))   \chi_{\mathfrak{s}^\alpha} \mathcal{F}_{1,\alpha}^{-1} f_{1,\alpha}|,
\end{split}
\end{equation}
where the differential operator $\nabla_{\xi^{'\alpha}}  \varphi_1(\frac{D^\alpha}i)$ has symbol 
$\nabla_{\xi^{'\alpha}}  \varphi_1(\xi^\alpha)$. We have the following sequence of estimates
\[
\begin{split}
& \| \sum_\alpha \sum_{\mathfrak{s}^\alpha \in A^\alpha(q,d)} |(x^{',\alpha}- c(\mathfrak{s}^\alpha)-x_1^\alpha \nabla_{\xi^{'\alpha}} \varphi_1(\eta_{1,\alpha})) \calE_{1,\alpha}  \mathcal{F}_{1,\alpha}   \chi_{\mathfrak{s}^\alpha} 
 \mathcal{F}_{1,\alpha}^{-1} f_{1,\alpha}| \cdot H \|_{L^{\frac2{k-1}}(q)} \\
\leq & \| \sum_\alpha \sum_{\mathfrak{s}^\alpha \in A^\alpha(q,d)} |(x^{',\alpha}- c(\mathfrak{s}^\alpha)-x_1^\alpha \nabla_{\xi^{'\alpha}} \varphi_1(\xi^\alpha)) \calE_{1,\alpha}  \mathcal{F}_{1,\alpha}   \chi_{\mathfrak{s}^\alpha} 
 \mathcal{F}_{1,\alpha}^{-1} f_{1,\alpha}| \cdot H \|_{L^{\frac2{k-1}}(q)} \\
+ & \| x_1^\alpha ( \nabla_{\xi^{'\alpha}} \varphi_1(\eta_{1,\alpha}) - \nabla_{\xi^{'\alpha}} \varphi_1(\xi^\alpha)) 
\calE_{1,\alpha}  \mathcal{F}_{1,\alpha}   \chi_{\mathfrak{s}^\alpha} 
 \mathcal{F}_{1,\alpha}^{-1} f_{1,\alpha}
 \cdot H  \|_{L^{\frac2{k-1}}(q)}, \\
 \end{split}
 \]
 We invoke  \eqref{com3} and continue with
\[
\begin{split}
= & \|  \sum_\alpha \sum_{\mathfrak{s}^\alpha \in A^\alpha(q,d)} | \calE_{1,\alpha} \mathcal{F}_{1,\alpha}  (x^{',\alpha}- c(\mathfrak{s}^\alpha))   \chi_{\mathfrak{s}^\alpha} \mathcal{F}_{1,\alpha}^{-1} f_{1,\alpha}| \cdot H \|_{L^{\frac2{k-1}}(q)} \\
+ & \| \sum_\alpha \sum_{\mathfrak{s}^\alpha \in A^\alpha(q,d)} x_1^\alpha  \calE_{1,\alpha} \mathcal{F}_{1,\alpha} 
(  \nabla_{\xi^{'\alpha}} \varphi_1(\eta_{1,\alpha}) - \nabla_{\xi^{'\alpha}} \varphi_1(\xi^\alpha)) \chi_{\mathfrak{s}^\alpha} \mathcal{F}_{1,\alpha}^{-1} f_{1,\alpha}) \|_{L^\frac2{k-1}}. \\
 \end{split}
 \]
 We apply the induction hypothesis, and use that inside $Q$ we have $|x_1^\alpha| \les \delta^{-1}R, \forall \alpha \in J$, to further continue with
\[
\begin{split}
\leq & A(R) \left( \sum_\alpha \sum_{\mathfrak{s}^\alpha \in A^\alpha(q,d)} \|  (x^{',\alpha}- c(\mathfrak{s}^\alpha))   \chi_{\mathfrak{s}^\alpha} \mathcal{F}_{1,\alpha}^{-1} f_{1,\alpha} \|_{L^2}  \right) G  \\
+ & A(R) \delta^{-1} R \left(   \sum_\alpha \sum_{\mathfrak{s}^\alpha \in A^\alpha(q,d)}  \| ( \nabla_{\xi^{'\alpha}} \varphi_1(\eta_{1,\alpha}) - \nabla_{\xi^{'\alpha}} \varphi_1(\xi^\alpha)) \chi_{\mathfrak{s}^\alpha} \mathcal{F}_{1,\alpha}^{-1} f_{1,\alpha} \|_{L^2} \right) G  \\
\les & A(R) \left(  \sum_\alpha \sum_{\mathfrak{s}^\alpha \in A^\alpha(q,d)}   \|  (x^{',\alpha}- c(\mathfrak{s}^\alpha))   \chi_{\mathfrak{s}^\alpha} \mathcal{F}_{1,\alpha}^{-1} f_{1,\alpha} \|_{L^2} + R \| \chi_{\mathfrak{s}^\alpha} \mathcal{F}_{1,\alpha}^{-1} f_{1,\alpha} \|_{L^2} \right)  G \\
\les & R A(R)  \left(  \sum_\alpha \sum_{\mathfrak{s}^\alpha \in A^\alpha(q,d)}   \|  \la \frac{x^{',\alpha}- c(\mathfrak{s}^\alpha)}{R} \ra   \chi_{\mathfrak{s}^\alpha} \mathcal{F}_{1,\alpha}^{-1} f_{1,\alpha} \|_{L^2}  \right)  G.
\end{split}
\]
 Note that it is in the above use of the induction estimate for $\calE_{1,\alpha} \mathcal{F}_{1,\alpha}  (x^{',\alpha}- c(\mathfrak{s}^\alpha))   \chi_{\mathfrak{s}^\alpha} \mathcal{F}_{1,\alpha}^{-1} f_{1,\alpha}$  that we need to tolerate
the relaxed support of $f_{1,\alpha}$. The margin of $f_{1,\alpha}$ is $\geq \delta-(\delta^{-1} R)^{-\frac12}=\delta-\delta^\frac12 R^{-\frac12}$ and it  is affected by the convolution
$\mathcal{F}_{1,\alpha} ( (x^{',\alpha}- c(\mathfrak{s}^\alpha))   \chi_{\mathfrak{s}^\alpha}) $  by a factor of at most $C R^{-1}$ which is smaller than $\frac12 \delta^\frac12 R^{-\frac12}$, provided that $\delta$ is small relative to $C^{-1}$. Hence the new margin is $\geq \delta-\frac12 \delta^\frac12 R^{-\frac12} \geq \delta - R^{-\frac12}$, this being the required margin for using the induction hypothesis on cubes of size $R$.

We claim that for any $\mathfrak{s}^\alpha \in A^\alpha(q,d)$
\[
\| \la \frac{x^{',\alpha}- c(\mathfrak{s}^\alpha)-x_1^\alpha \nabla_{\xi^{',\alpha}} \varphi_1(\eta_{1,\alpha})}R \ra \|_{L^\infty(q)} 
\approx \la \frac{d(\tilde \pi_{1,\alpha}(q) , \mathfrak{s}^\alpha)}R \ra \approx d
\]
uniformly in $\alpha$. This statement is invariant to rotations of coordinates, therefore we can assume that $\nabla_{\xi^\alpha} \varphi_1(\eta_{1,\alpha})=0$ and moreover that $x^{',\alpha}=(x_2,..,x_k), x^{'',\alpha}=(x_{k+1},..,x_{n+1})$. This way, $\tilde \pi_{1,\alpha}(x)=(0,x_2,..,x_k,0,..,0)$ and the statement is
obvious. 

From the above we obtain that, for $d$ large,
\[
\begin{split}
& d R \sum_\alpha \sum_{\mathfrak{s}^\alpha \in A^\alpha(q,d)} | \calE_{1,\alpha} \mathcal{F}_{1,\alpha}   \chi_{\mathfrak{s}^\alpha} \mathcal{F}_{1,\alpha}^{-1} f_{1,\alpha} |  \\
 \les & \sum_\alpha \sum_{\mathfrak{s}^\alpha \in A^\alpha(q,d)} |(x^{',\alpha}- c(\mathfrak{s}^\alpha)-x_1^\alpha \nabla_{\xi^{'\alpha}} \varphi_1(\eta_{1,\alpha})) \calE_{1,\alpha}  \mathcal{F}_{1,\alpha}   \chi_{\mathfrak{s}^\alpha} 
 \mathcal{F}_{1,\alpha}^{-1} f_{1,\alpha}|.
\end{split}
\]
Combining all the above estimates gives
\[
\begin{split}
& d R \| \sum_\alpha \sum_{\mathfrak{s}^\alpha \in A^\alpha(q,d)} | \calE_{1,\alpha} \mathcal{F}_{1,\alpha}   \chi_{\mathfrak{s}^\alpha} \mathcal{F}_{1,\alpha}^{-1}  f_{1,\alpha} | \cdot  H \|_{L^{\frac2{k-1}}(q)} \\
\les & \| \sum_\alpha \sum_{\mathfrak{s}^\alpha \in A^\alpha(q,d)} |((x^{',\alpha})- c(\mathfrak{s}^\alpha)-x_1^\alpha \nabla_{\xi^{',\alpha}} \varphi_1(\eta_{1,\alpha})) \calE_{1,\alpha}  \mathcal{F}_{1,\alpha}  \chi_{\mathfrak{s}^\alpha} \mathcal{F}_{1,\alpha}^{-1} f_{1,\alpha}| \cdot H \|_{L^{\frac2{k-1}}(q)} \\
\les & R A(R)  \left(  \sum_\alpha \sum_{\mathfrak{s}^\alpha \in A^\alpha(q,d)}   \|  \la \frac{x^{',\alpha}- c(\mathfrak{s}^\alpha)}{R} \ra   \chi_{\mathfrak{s}^\alpha} \mathcal{F}_{1,\alpha}^{-1} f_{1,\alpha} \|_{L^2}  \right)  G.
\end{split}
\]
From this we conclude with (after more iterations of the same argument) 
\[
\begin{split}
& \| \sum_\alpha \sum_{\mathfrak{s}^\alpha \in A^\alpha(q,d)} |\calE_{1,\alpha} \mathcal{F}_{1,\alpha}   \chi_{\mathfrak{s}^\alpha} \mathcal{F}_{1,\alpha}^{-1}  f_{1,\alpha}| \cdot H\|_{L^{\frac2{k-1}}(q)} \\
\les & d^{-N} A(R)  \sum_\alpha \sum_{\mathfrak{s}^\alpha \in A^\alpha(q,d)} \|  \la \frac{x^{',\alpha}- c(\mathfrak{s}^\alpha)}R \ra^N \chi_{\mathfrak{s}^\alpha} \mathcal{F}_{1,\alpha}^{-1} f_{1,\alpha} \|_{L^2}  \cdot G. 
\end{split}
\]
Note, that while the argument above assumed $d$ large, this last inequality holds for all $d$'s, since it is trivial for $d$ small.
The summation over $d$ is done in the usual manner 
\[
\begin{split}
& \|  \sum_\alpha  |\calE_{1,\alpha} \ f_{1,\alpha}|  H \|_{L^{\frac2{k-1}}(q)}^\frac2{k-1} \\
= & \| \sum_d \sum_\alpha \sum_{\mathfrak{s}^\alpha \in A^\alpha(q,d)} 
|\calE_{1,\alpha} \mathcal{F}_1   \chi_{\mathfrak{s}^\alpha} \mathcal{F}_1^{-1}  f_{1,\alpha}|  H \|_{L^{\frac2{k-1}}(q)}^\frac2{k-1} \\
\les & \sum_d \| \sum_\alpha \sum_{\mathfrak{s}^\alpha \in A^\alpha(q,d)} | \calE_{1,\alpha} \mathcal{F}_{1,\alpha}   \chi_{\mathfrak{s}^\alpha} \mathcal{F}_{1,\alpha}^{-1}  f_{1,\alpha}|  H \|_{L^{\frac2{k-1}}(q)}^\frac2{k-1} \\
\les & (A(R))^\frac{2}{k-1}  \sum_d d^{-N \cdot \frac2{k-1} } \left( \sum_\alpha \sum_{\mathfrak{s}^\alpha \in A^\alpha(q,d)} \|  \la \frac{x^{',\alpha}- c(\mathfrak{s}^\alpha)}R \ra^N \chi_{\mathfrak{s}^\alpha}  \mathcal{F}_1^{-1} f_{1,\alpha} \|_{L^2}\right)^\frac2{k-1}   G^\frac2{k-1}. 
\end{split}
\]
Using \eqref{ab} together with the straightforward estimate
\[
\| d^{-\frac{k}2} \|_{l^\frac{2}{k-1}_\N} \les 1,
\]
and we can continue the sequence of inequalities we started above
\[
\begin{split}
& \|  \sum_\alpha  |\calE_{1,\alpha} \ f_{1,\alpha}| H \|_{L^{\frac2{k-1}}(q)}^\frac2{k-1}  \\
\les & (A(R))^\frac{2}{k-1}  \| d^{-(N -\frac{k}2)} \sum_\alpha \sum_{\mathfrak{s}^\alpha \in A^\alpha(q,d)} \|  \la \frac{x^{',\alpha}- c(\mathfrak{s}^\alpha)}R \ra^N \chi_{\mathfrak{s}^\alpha}  \mathcal{F}_1^{-1} f_{1,\alpha} \|_{L^2}  \|_{l^2_d}^\frac2{k-1}   G^\frac2{k-1} \\
\les & (A(R))^\frac{2}{k-1}  \left( \sum_\alpha  \| d^{-(N -\frac{k}2)}  \sum_{\mathfrak{s}^\alpha \in A^\alpha(q,d)} \| \la \frac{x^{',\alpha}- c(\mathfrak{s}^\alpha)}R \ra^N \chi_{\mathfrak{s}^\alpha}  \mathcal{F}_1^{-1} f_{1,\alpha} \|_{L^2} \|_{l^2_d} \right)^\frac2{k-1}   G^\frac2{k-1} \\
\les & (A(R))^\frac{2}{k-1}  \left( \sum_\alpha  \left( \sum_{\mathfrak{s}^\alpha}  \la \frac{d(\tilde \pi_{1,\alpha}(q) , \mathfrak{s}^\alpha)}R \ra^{-(2N -2k)}  \| \la \frac{x^{',\alpha}- c(\mathfrak{s}^\alpha)}R \ra^N \chi_{\mathfrak{s}^\alpha}  \mathcal{F}_1^{-1} f_{1,\alpha} \|^2_{L^2} \right)^\frac12 \right)^\frac2{k-1}  G^\frac2{k-1}.
\end{split}
\]
In passing to the last line we have used that the cardinality of $A^\alpha(q,d)$ is $\approx \la d \ra^{k-1}$ in order 
to bound the $l^1_{\mathfrak{s}^\alpha \in A^\alpha(q,d)}$ norm of the summand by the $l^2_{\mathfrak{s}^\alpha \in A^\alpha(q,d)}$ of the same quantity. 

We are done with the justification of \eqref{INS3} and continue with the final step in the induction on scales. 
 We define the functions $g_i:  \mathcal{L} (\calH_i) \rightarrow \R$ for $i=2,..,k$ by 
\[
g_i(\bj)= \left( \sum_{q' \in \calC\calH_{i}(R)} \la  \frac{d(q(\bj),q')}{R} \ra^{-(2N-n^2) } \| \la \frac{x'-c(q')}{R} \ra^N \chi_{q'} \mathcal{F}_i^{-1} f_i \|_{L^2}^2 \right)^\frac12, 
\]
for $\bj \in \mathcal{L}(\calH_i)$, while $g_1: \mathcal{L} \rightarrow \R$ by
\[
g_1(\bj)= \sum_\alpha  \left( \sum_{\mathfrak{s}^\alpha}  \la \frac{d(\tilde \pi_{1,\alpha}(q(\bj)) , \mathfrak{s}^\alpha)}R \ra^{-(2N -2k)}  \| \la \frac{x^{',\alpha}- c(\mathfrak{s}^\alpha)}R \ra^N \chi_{\mathfrak{s}^\alpha}  \mathcal{F}_1^{-1} f_{1,\alpha} \|^2_{L^2} \right)^\frac12 
\]
for $\bj \in \mathcal{L}$. Using \eqref{SN}, it is obvious that, provided $N$ is large enough (in terms of $n$ only), the following holds true:
\[
\| g_i \|_{l^2(\mathcal{L}(\calH_i)} \ls \| f_i \|_{L^2}, \quad i=2,..,k.
\] 
We also claim that
\begin{equation} \label{gst}
\| g_1 \|_{l^\infty_{z_1,z_{k+1},..,z_{n+1}} l^2_{z_2,..,z_k}(\mathcal{L})} \ls \sum_\alpha \| f_{1,\alpha} \|_{L^2}.
\end{equation}
This follows from the following geometrical observation: say $\bj=\sum_{i=1}^{n+1} z_i N_i$ where $z_i \in \Z$. We fix $z_1,z_{k+1},..,z_{n+1}$
and note that as we vary $z_2,..,z_k$, $\tilde \pi_{1,\alpha}(q(\bj))$ are almost disjoint and, most important, the strips they generate $\mathfrak{s}^\alpha (\tilde \pi_{1,\alpha} q(\bj)) \subset \mathfrak{S}_{1,\alpha}(R)$ are almost disjoint for each $\alpha \in J$ (given a point in $\calH_{1,\alpha}$ there are  finitely many $\bj$ such that the point belongs to $\mathfrak{s}^\alpha (\tilde \pi_{1,\alpha} q(\bj))$). This is due to the fact that the projections $\tilde \pi_{1,\alpha}$ onto the affine subspace 
$\tilde \calH_{1,\alpha}$ are taken along directions that are transversal to $N_2,..,N_k$
and the the infinite sides of the strips are in directions that are transversal to $N_2,..,N_k$. Using this geometric observations, \eqref{gst} follows from the equivalent of \eqref{SN} for strips. 

Then we apply \eqref{LWd2} to conclude with
\[
\| (\sum_{\alpha} | \calE_{1,\alpha}  f_{1,\alpha} |) \cdot H \|_{L^{\frac2{k-1}}(Q)}  \ls A(R) \Pi_{i=1}^{k} \| f_i \|_{L^2}.
\]
Thus we obtain
\[
A(\delta^{-1} R) \leq C A(R)
\]
for a constant $C$ that is independent of $\delta$ and $R$. Iterating this gives $A(\delta^{-N} r) \leq C^{N} A(r)$.
Therefore $\max_{r \in [0,\delta^{-2}]} A(\delta^{-N} r) \leq C^N \max_{r \in [0,\delta^{-2}]} A(r)= C^N C(\delta) \mu^{\frac{n+1-k}2}$ is
obtained from the uniform pointwise bound
\begin{equation} \label{final}
\begin{split}
\| (\sum_{\alpha} | \calE_{1,\alpha}  f_{1,\alpha} |) \Pi_{i=2}^{k} \calE_i f_i \|_{L^\infty} 
& \ls \| \sum_{\alpha} | \calE_{1,\alpha}  f_{1,\alpha} | \|_{L^\infty} \Pi_{i=2}^{k}  \| \calE_i f_i \|_{L^\infty}  \\
& \ls \mu^{\frac{n+1-k}2} \left( \sum_{\alpha} \| \calE_{1,\alpha}  f_{1,\alpha} \right)  \|_{L^2} \Pi_{i=2}^{k} \| f_i \|_{L^2}
\end{split}
\end{equation}
which is integrated over arbitrary cubes of size $\leq \delta^{-2}$. Note that we have used the support properties of $f_{1,\alpha}$
to obtain the improved bound. 

For  $R \in [\delta^{-N},\delta^{-N-1}]$, the above implies
\[
A(R) \leq C^N C(\delta) \mu^{\frac{n+1-k}2} \leq R^\epsilon C(\delta) \mu^{\frac{n+1-k}2}
\]
provided that $C^N \leq \delta^{-N\epsilon}$. Therefore choosing $\delta=C^{-\frac{1}{\epsilon}}$ leads to the desired result. 

\end{proof}

\begin{proof}[Proof of Corollary \ref{MBcor}]  In each $\calH_i, i=1,..,k$, $y_i \in \R$, we define $\calH_i + y_i N_i$ to be the translate of $\calH_i$ by $y_i N_i$. Also $\calC \calH_i(r) + y_i N_i$ is the corresponding translate of $\calC \calH_i(r)$ by $y_i N_i$. 

Given any vector $y \in \R^{n+1}$ with $|y_i- c_i(q)| \leq r, i=1,..,k$ and $y_i=c_i(q), k+1 \leq i \leq n+1$, we claim the following inequality:
\[
\begin{split}
& \| (\sum_{\alpha} | \calE_{1,\alpha}  f_{1,\alpha} |) \cdot H \|_{L^{\frac2{k-1}}(q)} \\
& \les_N C(\epsilon) r^\epsilon \mu^{\frac{n-k+1}2}  \Pi_{i=2}^{k} 
\left( \sum_{q' \in \calC\calH_i(r) +y_i N_i} \la \frac{d(\pi_{N_i} q,q')}r \ra^{-(2N-n^2) } \| \la \frac{x-c(q')}r \ra^{N} \chi_{q'} \mathcal{F}_i^{-1} f_i \|_{L^2(\calH_i+y_i N_i)}^2 \right)^\frac12 \\
& \cdot  \sum_\alpha  \left( \sum_{q' \in \calC_{1,\alpha}(r)+y_1 N_1}  \la \frac{d(\tilde \pi_{1,\alpha}(q) , q')}r \ra^{-(2N -2k)}  \| \la \frac{x^{\alpha}- c(q')}r \ra^N \chi_{q'}  \mathcal{F}_{1,\alpha}^{-1} f_{1,\alpha} \|^2_{L^2(\calH_{1,\alpha}+y_1 N_1)} \right)^\frac12.
\end{split}
\]
It suffices to prove this estimate for $y=0$, in which case it is very similar to \eqref{INS3}. Except that, for the $f_{1,\alpha}$
terms we do not use strips, but cubes. This should be a reason for concern, as the use of strips was necessary to keep the localization
of the $f_{1,\alpha}$ at scale $\mu$ intact throughout the induction process. However, given that $\mu \approx r^{-1}$, the multiplication with 
$\chi_{q'}$ alters the localization by a factor of $r^{-1} \approx \mu$. A similar argument to the one used in the proof of \eqref{INS3}
gives the above estimate. 

Next we average the above estimate  over the values of $(y_1,..,y_k)$ 
satisfying $|y_i- c_i(q)| \leq r$ (keeping $y_i=c_i(q), i \geq k+1$) to obtain
\[
\begin{split}
& \| \Pi_{i=1}^{k} \calE_i f_i \|_{L^1(q)} \les C(\epsilon) r^\epsilon (r^{-1})^{\frac{n-k+1}2} r^{-\frac{k}2} \\
  &  \cdot \Pi_{i=1}^{k}  \left( \int_{|y_i| \leq r} \sum_{q' \in \calC \calH_i(r)+y_i N_i} \la \frac{d(\pi_{N_i} q,q')}r \ra^{-N} \| \la \frac{x-c(q')}r \ra^{N} \chi_{q'} \calE_i f_i \|_{L^2(\calH_i+y_i N_i)}^2 \right)^\frac12 \\
  & \les  C(\epsilon) r^\epsilon r^{-\frac{n+1}2} \Pi_{i=1}^{k} \| \tilde \chi_q \calE_i f_i \|_{L^2}
\end{split}
\]
This finishes the proof. 

\end{proof}

\section{A new multilinear estimate} \label{CMS}

In this Section we address \eqref{CK}, the last supporting detail in the proof of Proposition \ref{NLP}.
As described in Section \ref{SePa}, we are given $k$ smooth hypersurfaces $S_i = \Sigma_i(U_i)$ with smooth parameterizations 
$\Sigma_i$ obeying \eqref{normal2}. These hypersurfaces can be thought as living in the frequency space and generate the operators $\calE_i$. 
In addition we are given another smooth surface $S$ of dimension $n-k+1$, that should be thought as living in the physical space,
with the following properties:

\bf P1: \rm $S$ is uniformly transversal to $N_1(\zeta_1), .., N_{k}(\zeta_{k})$ for all choices $\zeta_i \in S_i$: there exists $\nu >0$
such that, for any $\zeta_i \in S_i, i=1,..,k$, for any $y \in S$ and for any orthonormal basis $v_{k+1},..,v_{n+1}$ of $T_y S$, the following holds true 
\[
vol (N_1(\zeta_1), .., N_{k}(\zeta_{k}), v_{k+1},..,v_{n+1}) \geq \nu.
\]

\bf P2: \rm There exists $\nu >0$ such that for any $P_1,P_2 \in S$, for any $\zeta_1 \in S_1$, for any $\zeta_i \in S_i, \zeta_j \in S_j, 2 \leq i < j \leq k$ and for any $\alpha_i,\alpha_j \in \R$, the following holds true
\begin{equation} \label{trans3}
vol(\overrightarrow{P_1 P_2}, N_1(\zeta_1), \overrightarrow{v}) \geq \nu |\overrightarrow{P_1 P_2}| \cdot |\overrightarrow{v}|,
\end{equation}
where $ \overrightarrow{v}= \alpha_i N_i(\zeta_i) - \alpha_j N_j(\zeta_j)$.

As already mentioned in Section \ref{SePa}, in this section we make no curvature assumptions on $S_i$. 
However, we note that property \bf P2 \rm follows from curvature properties similar to those used  in the main Theorem \ref{mainT};
in other words the curvature properties have been encoded in the structure of $S$.

Given $r>0$, we recall that $\calC(r)$ is the set of unit cubes in $\R^{n+1}$ with centers in the lattice $r \Z^{n+1}$.
With $S$ as above and for each $q \in \calC(r)$, we define
\[
S(q)= q+ S \cap B(0, r^2). 
\]
Here $S \cap B(0, r^2)$ should be understood as follows: we cut the surface $S$ at scale $\approx r^2$, and whether this is performed in a ball or cube, centered at the origin or somewhere else, it is unimportant. The reason for doing this comes from the use of wave packets and their scales.

More generally, given a subset $A \subset \calC(r)$, we define
\[
S(A)= \cup_{q \in A} S(q). 
\]

The main result of this section is the following:
\begin{theo} \label{NewRE} Assume that $S_i, i=1,..,k$ and $S$ are as above. Then for any $\epsilon > 0$, there is $C(\epsilon)$ such that the following holds true
\begin{equation} \label{CK1}
\left( \sum_{q \in \calC(r) \cap B(0,r^2)} \left( \| \calE_1 f_1 \|_{L^2(S(q))} \Pi_{i=2}^{k} \| \calE_i f_i \|_{L^2(q)} \right)^\frac{2}{k-1}  \right)^\frac{k-1}2
\leq C(\epsilon) r^\frac{k}2 r^\epsilon \Pi_{i=1}^{k} \| f_i \|_{L^2(U_i)}.
\end{equation} 

\end{theo}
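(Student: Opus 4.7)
The proof will follow the Bennett-Carbery-Tao paradigm of induction on scales, adapted to accommodate the lower-dimensional surface $S$ in the first factor. Let $A(r)$ denote the best constant in \eqref{CK1}. The base case is trivial for $r$ of order one via $L^\infty$ bounds. For the inductive step I aim to show that, after subdividing the surfaces $S_i$ into pieces of diameter $\delta \ll 1$, one can pass from scale $r$ to scale $\delta^{-1} r$ at the cost of an absolute multiplicative constant. Iterating this passage $N$ times and choosing $\delta = C^{-1/\epsilon}$ then yields $A(r) \leq C(\epsilon) r^\epsilon$.

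The inductive step would proceed by wave packet decomposition at scale $r$ for each $\calE_i f_i$: for $\calE_1 f_1$ the wave packets are tubes of radius $r$ and length $r^2$ aligned along the normals $N_1(\xi_T)$, and similarly for each $\calE_i f_i$ with $i \geq 2$. The key observation is that property \textbf{P1} (uniform transversality of $S$ to all normals $N_i$) forces every tube $T$ in the decomposition of $\calE_1 f_1$ to meet $S(q)$ in a region of controlled geometry, so that $\| \phi_{1,T} \|_{L^2(S(q))}$ is comparable, up to a dimensional factor, to the $L^2$ mass of $\phi_{1,T}$ restricted to a cube of size $r$ adapted to the tube. Combining this with standard wave packet orthogonality for the remaining factors reduces the whole estimate to a combinatorial count of tube and cube incidences.

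Property \textbf{P2} is the decisive geometric input for the $l^{2/(k-1)}_q$ sum over $q \in \calC(r) \cap B(0,r^2)$. Condition \eqref{trans3} says that any difference $\alpha_i N_i(\zeta_i) - \alpha_j N_j(\zeta_j)$ with $i, j \geq 2$ is uniformly transversal to the plane spanned by a tangent direction $\overrightarrow{P_1 P_2}$ of $S$ and the normal $N_1(\zeta_1)$. This is precisely the transversality needed to run a discrete Loomis-Whitney or BCT-type counting argument once the problem has been reduced, by the wave packet analysis, to estimating the number of configurations consisting of a tube aligned with $N_1$, a cube $q$ displaced along $S$, and one wave packet for each $\calE_i f_i$ with $i \geq 2$. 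The resulting incidence bound produces the $r^{k/2}$ factor on the right-hand side.

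The main obstacle will be closing the induction with constants uniform in $\delta$: one must verify that the transversality parameters in \textbf{P1} and \textbf{P2} survive the subdivision of the $S_i$ and that the wave packet cutoff errors telescope across scales. A further subtlety is the hybrid nature of $\| \calE_1 f_1 \|_{L^2(S(q))}$: it is neither a cube $L^2$ norm (to which BCT applies directly) nor a pure transversal slice energy (for which a classical Strichartz-type identity would suffice), so one must genuinely couple the wave packet localization in the $N_1$-direction with the Fourier support structure on $S_1$. As the introduction emphasizes, this is the feature that distinguishes the $k \geq 4$ case: for $k = 2$ the estimate collapses to an energy inequality and for $k = 3$ a pure $\ell^2$ Cauchy-Schwarz argument suffices, whereas for $k \geq 4$ the failure of the triangle inequality in $L^{2/(k-1)}$ forces a genuine off-diagonal analysis of the wave packet interactions with $S$.
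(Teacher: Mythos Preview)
Your proposal identifies the right geometric ingredients---\textbf{P1} governs the interaction of each $T_1$-tube with the thickened surface $S(q)$ (this is the content of Lemma~\ref{tub}), and \textbf{P2} feeds a discrete Loomis--Whitney step---but the induction scheme you describe is not the one that closes, and it differs from the paper's in a structural way.

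You propose to let $A(r)$ be the best constant in \eqref{CK1} and to pass from scale $r$ to scale $\delta^{-1}r$. But the statement already couples two scales: cubes $q$ of size $r$ sitting inside a ball of radius $r^2$. Rescaling $r \to \delta^{-1}r$ moves both simultaneously, and there is no self-similarity to exploit; a base case ``$r\approx 1$'' is not available because the wave packets must live at the single fixed scale $r$ dictated by the ambient ball $B(0,r^2)$. The paper instead keeps $r$ \emph{fixed} throughout, performs the wave packet decomposition of $\calE_1 f_1$ once (tubes of width $r$ and length $\approx r^2$), and runs the induction on an \emph{intermediate} scale $R$ with $r \le R \lesssim r^2$: one proves $A(\delta^{-1}R) \lesssim A(R)$ for a quantity $A(R)$ defined over parallelepipeds $Q \in \calC(R)$. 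The base case is $R=r$, which follows directly from Lemma~\ref{tub}, and after $O(\log r)$ steps one reaches $R \approx r^2$ with total loss $C^{O(\log r)} = r^{\epsilon}$.

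Two further points where your outline diverges. First, only $\calE_1 f_1$ is decomposed into wave packets; the factors $\calE_i f_i$ for $i\ge 2$ are handled by the phase-space localization $\chi_{Q'}$ exactly as in \cite{Be1} (this is the strengthening \eqref{INS} of the induction hypothesis). Second---and this is the mechanism your sketch does not supply---the induction hypothesis \eqref{IH} carries on its right-hand side not $\|f_1\|_{L^2}$ but the weighted quantity
\[
\Bigl( \sum_{T_1 \in \calT_1} \bigl\langle d(T_1,S(Q))/R \bigr\rangle^{-N}  c_{2N}(T_1)^2 \Bigr)^{1/2}.
\]
This refinement is what allows the discrete Loomis--Whitney inequality \eqref{LWd2} to close the step from $R$ to $\delta^{-1}R$: the function $g_1(\bj)$ built from these weights is shown to lie in $l^\infty_{j_1,j_{k+1},\dots,j_{n+1}} l^2_{j_2,\dots,j_k}$, and \emph{that} is precisely where \textbf{P2} enters, via the counting claim that for fixed $T_1$ and $d\in\N$ there are $\lesssim d^{k-1}$ intermediate cubes $Q(\bj)$ (as $j_2,\dots,j_k$ vary) with $d(T_1, S(Q(\bj))) \le dR$. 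Without tracking this tube--to--$S(Q)$ distance through the induction, the $l^{2/(k-1)}$ sum over $q$ cannot be controlled.
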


The above result has a multilinear flavor to it. The factor $r^{\frac{k}2}$ has to appear because we consider the mass of 
$\calE_i f_i$ in neighborhoods of size $r$ of hypersurfaces across which we would have good energy estimates, see the proof of the Theorem for details. Otherwise \eqref{CK1} is similar to a multilinear restriction estimate, see \eqref{AMRE} (with $\mu = 1$), except that now, one of the objects, $\calE_1 f_1$ is measured in a more complex fashion. 

The complexity is not so much from the fact that we collect energy from various spatial regions; indeed if
$v_i$ arbitrary vectors of any length, then and estimate of type
\[
\| \| \Pi_{i=1}^k\| \calE_i f_i \|_{L^2(q+v_i)} \|_{l^{\frac2{k-1}}_q} 
\les r^{\frac{k}2} r^\epsilon \Pi_{i=1}^k \| f_i \|_{L^2(U_i)}
\]
is similar to the one with $v_i=0$ which in turn is similar to \eqref{AMRE} (with $\mu = 1$). 

The complexity has to do with the factor $ \| \| \calE_1 f_1 \|_{L^2(U_i)} \|_{L^2(q+v_1)} $ being replaced with $ \| \calE_1 f_1 \|_{L^2(q+S)}$, that is with collecting  the energy of $\calE_1 f_1$ not only across a cube $q+v_1$, but across a thickened surface $q+S$. It is the dimensionality of the surface $S$ being $n-k+1$ versus that of $v_1$ being $0$ that changes the character of the estimate. Another feature to point out is the following: the classical mutilinear estimate improves under certain localization properties of the support of the interacting functions (see the $\mu$ factor in \eqref{AMRE});
 \eqref{CK1} does not improve under such localizations. 

In \cite{Be2} we provided an energy estimate of the following type
\begin{equation} \label{EE}
\| \calE_1 f_1 \|_{L^2(\tilde S+q)} \les r^\frac12 \| f_1 \|_{L^2(U_1)}
\end{equation}
where $\tilde S$ is a hypersurface (i.e., of codimension $1$) that is transversal to the propagation directions of $\calE_1 f_1$,
that is to any $N_1(\zeta_1)$ with $\zeta_1 \in S_1$.  

The starting point of the arguments in this section is a refinement of \eqref{EE} in terms of wave packets. We use the result of Lemma \ref{LeWP}
with $c=1$ and $R=4r^2$ to obtain the wave packet decomposition
\[
\calE_1 f_1 = \sum_{T_1 \in \calT_1} \phi_{T_1}.
\]
We also recall the definition of $c_N(T_1)$ from \eqref{qest1}
and their property \eqref{massc}.

\begin{lema} \label{tub}
There exists $N \in \N$ such that  for any $q \in \calC(r)$ centered inside $B(0,r^2)$, the following holds true:
\begin{equation} \label{EfT}
\| \calE_1 f_1 \|_{L^2(S(q))} \les r^\frac12  \left( \sum_{T_1 \in \calT_1} \la \frac{d(T_1,S(q))}r \ra^{-N}  c_{2N}(T_1)^2) \right)^\frac12. 
\end{equation}
\end{lema}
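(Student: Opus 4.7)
The plan is to derive \eqref{EfT} from the wave packet decomposition $\calE_1 f_1 = \sum_{T_1 \in \calT_1} \phi_{T_1}$ by establishing almost-orthogonality of the packets on $S(q)$ and then exploiting the transversality of $S$ to each tube direction $N_1(\xi_{T_1})$ supplied by property \textbf{P1}. First I would cover $S(q)$ by a disjoint family of cubes $q' \in \calQ_J(Q)$ of side $r$. Since $S \cap B(0,r^2)$ is an $(n-k+1)$-dimensional surface of diameter $r^2$ thickened by the cube $q$ of side $r$, an elementary volume count gives $\#\{q' : q' \cap S(q) \neq \emptyset\} \ls r^{n-k+1}$. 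On each such cube I would invoke the natural-scale almost-orthogonality of the wave packets to obtain
\begin{equation*}
\|\calE_1 f_1\|_{L^2(q')}^2 \ls r \sum_{T_1 \in \calT_1} c_{2N}(T_1)^2 \, \tilde\chi_{T_1}(x_{q'},t_{q'})^{N},
\end{equation*}
which is essentially \eqref{qest1} once one verifies that distinct packets $\phi_{T_1}$ --- whose frequencies are $r^{-1}$-separated --- do not combine constructively on $q'$ beyond polynomial losses that are absorbed by $\tilde\chi_{T_1}^{N}$.

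Summing this estimate over the cover of $S(q)$ and swapping the order of summation reduces the lemma to a purely geometric bound on $\sum_{q' \subset S(q)} \tilde\chi_{T_1}(x_{q'},t_{q'})^{N}$ for each fixed $T_1$. Setting $d := d(T_1,S(q))$, property \textbf{P1} implies that the line through $T_1$ meets $S$ transversally, so the set of cubes $q' \subset S(q)$ within distance $\le s$ of $T_1$ is empty for $s \ll d$ and has cardinality $\ls \la s/r\ra^{n-k+1}$ for $s \ges d$. Telescoping the decay $\tilde\chi_{T_1}^N \ls \la s/r\ra^{-N}$ against this counting yields
\begin{equation*}
\sum_{q' \subset S(q)} \tilde\chi_{T_1}(x_{q'},t_{q'})^{N} \ls \la d/r\ra^{-(N-(n-k+1))},
\end{equation*}
which for $N$ taken large enough matches, after relabeling, the factor $\la d(T_1,S(q))/r\ra^{-N}$ appearing in \eqref{EfT}.

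The main obstacle is the almost-orthogonality step on a single cube $q'$: the packets $\phi_{T_1}$ are not exactly orthogonal at their physical scale $r$, and one has to combine the $r^{-1}$-separation of the frequencies $\xi_{T_1}$ with the polynomial decay $\tilde\chi_{T_1}^N$ in a Schur test, carefully tracking constants so that the losses are controllable. Using $c_{2N}$ rather than $c_N$ in the statement of the lemma is precisely what provides enough margin to survive both this step and the subsequent geometric summation over the cover of $S(q)$.
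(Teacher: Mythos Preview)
Your proposal is correct and follows essentially the same route as the paper: cover $S(q)$ by $r$-cubes $q'$, use almost-orthogonality of the wave packets on each $q'$, swap the order of summation, and control the resulting geometric sum $\sum_{q'\cap S(q)\neq\emptyset}\tilde\chi_{T_1}(x_{q'},t_{q'})^{N}$ using the transversality \textbf{P1}. Two minor remarks: first, the almost-orthogonality on $q'$ is treated as a one-liner in the paper (it is immediate from the $r^{-1}$-separation of the $\xi_T$ and the tails in $x_T$), so the Schur-test concern in your last paragraph is not a real obstacle; second, the paper organizes the geometric step slightly differently, splitting $\tilde\chi_{T_1}^{2N}=\tilde\chi_{T_1}^{N}\cdot\tilde\chi_{T_1}^{N}$ and using $\sup_{q'}\tilde\chi_{T_1}^{N}\les\la d(T_1,S(q))/r\ra^{-N}$ together with $\sum_{q'\cap S(q)\neq\emptyset}\tilde\chi_{T_1}^{N}\les 1$, rather than your single telescoped bound --- but the two are equivalent and both rely on the same counting of cubes in $S(q)$ near $T_1$.
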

\begin{proof} We start by noting that since $S(q) \subset B(0,4r^2), \forall q \in \calC(r)$ centered inside $B(0,r^2)$. 

We write
\[
\begin{split}
\| \calE_1 f_1 \|_{L^2(S(q))}^2 & \les \sum_{q' \cap S(q) \ne \emptyset}  \| \calE_1 f_1 \|_{L^2(q')}^2 \\
& \les \sum_{q' \cap S(q) \ne \emptyset}  \sum_{T_1 \in \calT_1} \| \phi_{T_1} \|_{L^2(q')}^2 \\
& =  \sum_{T_1 \in \calT_1}  \sum_{q' \cap S(q) \ne \emptyset} \| \phi_{T_1} \|_{L^2(q')}^2 \\
& \les  \sum_{T_1 \in \calT_1}  \sum_{q' \cap S(q) \ne \emptyset} \tilde \chi_{T_1}(x_{q'},t_{q'})^{N} \tilde \chi_{T_1}(x_{q'},t_{q'})^{-N} \| \phi_{T_1} \|_{L^2(q')}^2 \\
& \les \sum_{T_1 \in \calT_1} \la \frac{d(T_1,S(q))}r \ra^{-N}  r c_{2N}(T_1)^2. \\
\end{split}
\]
In justifying the last line we used the following two estimates: the obvious estimate 
\[
supp_{q' \cap S(q) \ne \emptyset} \tilde \chi_{T_1}(x_{q'},t_{q'})^{N} \les \la \frac{d(T_1,S(q))}r \ra^{-N},
\]
as well as
\begin{equation} \label{t1}
 \sum_{q' \cap S(q) \ne \emptyset} \tilde \chi_{T_1}(x_{q'},t_{q'})^{-N} \| \phi_{T_1} \|_{L^2(q')}^2 \les r c_{2N}(T_1)^2. 
\end{equation}
We justify \eqref{t1} as follows: from \eqref{qest1} we obtain
\[
 supp_{q'} \tilde \chi_T(x_{q'},t_{q'})^{-2N} \| \phi_{T_1} \|_{L^2(q')}^2 \les r c_{2N}(T_1)^2.  
\]
Then \eqref{t1} follows from
\[
\sum_{q' \cap S(q) \ne \emptyset} \tilde \chi_{T_1}(x_{q'},t_{q'})^{N} \les 1. 
\]
But, choosing $N$ large enough, this is a direct consequence of the transversality between $T_1$ and $S(q)$. 

\end{proof}

\begin{proof}[Proof of Theorem \ref{NewRE}]  As we already explained in the proof of Theorem \ref{MB2}, it suffices to establish the result under the following assumption: given some $0 < \delta \ll 1$, the diameter of $U_i$ is $ \leq \delta$. 

The setup is also similar to the one in Section \ref{LocME}. We pick $\zeta_i^0 \in \Sigma_i$, let $N_i=N_i(\zeta_i^0)$ be the 
normal to $\Sigma_i$ and let $\calH_i$ be the transversal hyperplane passing through the origin with normal $N_i(\zeta_i^0)$. Using a smooth change of coordinates, we can assume that $U_i \subset \calH_i$ and that 
\begin{equation} \label{E}
\calE_i f_i = \int_{U_i} e^{i (x' \xi' + x_i \varphi_i(\xi'))} f_i(\xi') d\xi',
\end{equation}
where $x=(x_i,x')$, $x_i$ is the coordinate in the direction of $N_i$ and $x'$ are the coordinates in the directions from $\calH_i$. 
Since the diameter of $U_i$ is $\les \delta$,  it follows that $|\nabla \varphi_i(x)-\nabla \varphi_i(y)| \les \delta$ for any $x,y \in U_i$.  
Using the normals $N_i$ we construct all entities described in Section \ref{NOT} as well as the margin of a function $f: \calH_i \rightarrow \C$ as defined in \eqref{defmrg3}.

We complete the system of vectors by choosing $N_{k+1},..,N_{n+1}$ such that \eqref{normal} is satisfied.
We then construct the lattice $\mathcal{L}:=\{ z_1 N_1 + ... +z_{n+1} N_{n+1}: (z_1,..,z_{n+1}) \in \Z^{n+1} \}$ and 
for given $r > 0$ we let $\calC(r)$ be the set of of parallelepipeds of size $r$ in $\R^{n+1}$ relative to the lattice $\mathcal{L}$.
The lattice $\mathcal{L}$ and the set of parallelepipeds $\calC(r)$ obtained this way are "oblique", thus different than the one claimed in \eqref{CK1}, 
which is built on the standard orthonormal basis. However, passing from results in terms of an oblique lattice to the ones in the standard basis is easy:
it can be done by changing coordinates, or by direct estimates. 

Our induction will aim at quantifying the behavior of $A(R)$ defined below. 

\begin{defin} Given $r \leq R \les r^2 $ we define $A(R)$ to be the best constant for which the estimate
\begin{equation} \label{IH}
\begin{split}
& \left( \sum_{q \in \calC(r) \cap Q} \left( \| \calE_1 f_1 \|_{L^2(S(q))}
 \Pi_{i=2}^{k} \| \calE_i f_i \|_{L^2(q)} \right)^\frac{2}{k-1}  \right)^\frac{k-1}2 
\\
\leq & A(R) r^\frac{k}2  \left( \sum_{T_1 \in \calT_1} \la \frac{d(T_1,S(Q))}R \ra^{-N}  c_{2N}(T_1)^2 \right)^\frac12
 \Pi_{i=2}^{k} \| f_i \|_{L^2(U_i)}
\end{split}
\end{equation}
holds true for all parallelepipeds $Q \in \calC(R)$,and all $f_i \in L^2(U_i), i=2,..,k$
obeying the margin requirement
\begin{equation} \label{mrpg3}
margin^i(f_i) \geq \delta-R^{-\frac12}.
\end{equation} 
\end{defin}

The above estimate holds true for $R=r$ with $A(r) \approx 1$; indeed, it follows from \eqref{EfT} and the obvious estimate
\[
\| \calE_i f_i \|_{L^2(q)} \ls r^{\frac12} \| f_i \|_{L^2(U_i)}.
\]
Note also that we limit the range of the argument to $R \les r^2$. This is important so as to be able to use the wave packet described above. 

Next, we proceed with the induction step. We provide an estimate inside any cube $\textbf{Q} \in \calC(\delta^{-1}R)$ based on prior information on estimates inside cubes $Q \in \calC(R) \cap \textbf{Q}$. Without restricting the generality of the argument, we assume
that $\textbf{Q}$ is centered at the origin and recall that each $Q \in \calC(R) \cap \textbf{Q}$ has its center in
$ \mathcal{L}$. When such a  $Q$ is projected using $\pi_{N_i}$ onto $\calH_i$ one obtains $\pi_{N_i} Q \in \calC \calH_i$.
We let $Q_0$ be the cube in $\calC(R)$ centered at the origin. 

We strengthen the induction hypothesis \eqref{IH} to
\begin{equation} \label{INS}
\begin{split}
& \left( \sum_{q \in \calC(r) \cap Q} \left( \| \calE_1 f_1 \|_{L^2(S(q))}  \Pi_{i=2}^{k} \| \calE_i f_i \|_{L^2(q)} \right)^\frac{2}{k-1}  \right)^\frac{k-1}2 \\
\les  & A(R) \left( \sum_{T_1 \in \calT_1} \la \frac{d(T_1,S(Q))}R \ra^{-N}  c_{2N}(T_1)^2 \right)^\frac12 \\
& \cdot \Pi_{i=2}^{k} \left( \sum_{Q' \in \calC \calH_i(R)} \la \frac{d(\pi_{N_i} Q,Q')}R \ra^{-(N-2n^2)} \| \la \frac{x-c(Q')}R \ra^{N} \chi_{Q'} \mathcal{F}_i^{-1} f_i \|_{L^2}^2 \right)^\frac12
\end{split}
\end{equation}

The improvement for the terms $\calE_i f_i$ with $i \geq 2$ is standard by now, see \eqref{INS3} and the references to \cite{Be1}.  
Using \eqref{INS} we conclude the argument using the discrete Loomis-Whitney inequality in \eqref{LWd2}. 
For $i=2,..,n$, we define the functions $g_i: \mathcal{L}(\calH_i) \rightarrow \R$ by
\[
g_i(\bj)= \left( \sum_{Q' \in \calC \calH_{i}(R)} \la  \frac{d(Q(\bj),Q')}{R} \ra^{-(N-2n^2)} \| \la \frac{x'-c(Q')}{R} \ra^N \chi_{q'} \mathcal{F}_i^{-1} f_i \|^2_{L^2} \right)^\frac{1}2, \bj \in \mathcal{L}(\calH_i) .
\]
where we recall that $Q(\bj) \in \mathcal{CH}_i(R)$ is the cube centered at $R \bj$.

From \eqref{SN}, it is easy to see that for $N$ large enough (depending only on $n$),
$g_i \in l^2(\Z^n), i=2,..,k$ with
\[
\| g_i \|_{l^2(\mathcal{L}(\calH_i))} \ls \| f_i \|_{L^2}.
\] 
For $i=1$ and $\bj \in \mathcal{L} $,  we recall that $Q(\bj)= Q_0 + R \bj \in \calC(R)$ is the cube centered at $R \bj$, 
and define
\[
g_1(\bj)= \left( \sum_{T_1 \in \calT_1} \la \frac{d(T_1,S(Q(\bj)))}R \ra^{-N}  c_{2N}(T_1)^2 \right)^\frac12. 
\]
We claim that $g_1\in  l^\infty_{j_1,j_{k+1},..,j_{n+1}} l^2_{j_2,..,j_k}(D)$, where 
$D=\{\bj \in \mathcal{L}: \| \bj \|_{l^\infty} \leq  \delta^{-1} \}$ is the domain of interest, together with the estimate
\begin{equation} \label{finale}
\| g_1 \|_{l^\infty_{j_1,j_{k+1},..,j_{n+1}} l^2_{j_2,..,j_k}(D)} 
\ls \left( \sum_{T_1 \in \calT_1} \la \frac{d(T_1,S(\textbf{Q}))}{ \delta^{-1} R} \ra^{-N}  c_{2N}(T_1)^2 \right)^\frac12.
\end{equation}
We assume for a moment \eqref{finale} to be true. Using \eqref{INS}, we invoke \eqref{LWd2} and the above estimates
on $g_i$ to obtain
\[
\begin{split}
&  \left( \sum_{q \in \calC(r) \cap \textbf{Q}} \left( \| \calE_1 f_1 \|_{L^2(S(q))}  \Pi_{i=2}^{k} \| \calE_i f_i \|_{L^2(q)} \right)^\frac{2}{k-1} 
\right)^\frac{k-1}2  \\
= & \left( \sum_{Q \in \calC(R) \cap \textbf{Q} } \sum_{q \in \calC(r) \cap Q} \left( \| \calE_1 f_1 \|_{L^2(S(q))}  \Pi_{i=2}^{k} \| \calE_i f_i \|_{L^2(q)} \right)^\frac{2}{k-1} \right)^\frac{k-1}2 \\
\les & A(R) r^\frac{k}2  \left( \sum_{T_1 \in \calT_1} \la \frac{d(T_1,S(\textbf{Q}))}R \ra^{-N}  c_{2N}(T_1)^2 \right)^\frac12
 \Pi_{i=2}^{k} \| f_i \|_{L^2(U_i)}.
\end{split}
\]
Thus we establish that
\[
A(\delta^{-1} R) \les A(R). 
\]
This implies \eqref{CK1} in a standard fashion, see for instance \cite{Be1}, and concludes our proof.

We owe an argument for the claim \eqref{finale}. We fix $j_1,j_{k+1},..,j_{n+1}$ with $\max \{ |j_1|, |j_{k+1}|,..,| j_{n+1}|\} \leq \delta^{-1}$. 
Then  \eqref{finale} is a consequence of the estimate
\[
\sum_{j_2,..,j_k: |j_l| \leq \delta^{-1}} \sum_{T_1 \in \calT_1} \la \frac{d(T_1,S(Q(\bj)))}R \ra^{-N}  c_{2N}(T_1)^2
\les  \sum_{T_1 \in \calT_1} \la \frac{d(T_1,S(\delta^{-1} Q_0))}{\delta^{-1} R} \ra^{-N}  c_{2N}(T_1)^2,
\]
which in turn follows from the estimate
\begin{equation} \label{lcl}
\sum_{j_2,..,j_k: |j_l| \leq \delta^{-1}}  \la \frac{d(T_1,S(Q(\bj)))}{ R} \ra^{-N}   \les   \la \frac{d(T_1,S(\delta^{-1} Q_0))}{ \delta^{-1} R}  \ra^{-N}.
\end{equation}
\eqref{lcl} is easily derived from the following claim: given any $d \in \N$, there are $\les d^{k-1}$ values of $\bj \in D$ such that 
$d(T_1,S(Q(\bj))) \leq dR$. 

Thus, the last thing we need to do is establishing the claim above. Let $\bj_1, \bj_2 \in D$ be such that $d(T_1,S(Q(\bj_1))), d(T_1,S(Q(\bj_2))) \approx dR$. Let $L_1$ be the center line of $T_1$; it has direction $N_1=N_1(\zeta_1)$ for some $\zeta_1 \in S_1$. Using the fact that $R \geq r$, 
we conclude that there are points $P_1,P_2 \in T_1, \tilde P_1 \in S(Q(\bj_1)), \tilde P_2 \in S(Q(\bj_2))$ with the following properties:

- $P_1, P_2 \in L_1$ 

- $\tilde P_1 \in S + R \bj_1, \tilde P_2 \in S + R \bj_2$

- $d(P_1, \tilde P_1), d(P_2, \tilde P_2) \les dR$.

From the vector identity
\[
\overrightarrow{\tilde P_1 \tilde P_2} = \overrightarrow{\tilde P_1 P_1} + \overrightarrow{P_1 P_2} + \overrightarrow{P_2  \tilde P_2},
\]
and the above properties, we obtain 
\[
| \overrightarrow{\tilde P_1 \tilde P_2} - \overrightarrow{P_1 P_2}| \les dR. 
\]
On the other hand, $\tilde P_1 = Q_1 + R \bj_1, \tilde P_2 = Q_2 + R \bj_2$ for some $Q_1,Q_2 \in S$, therefore
\[
\overrightarrow{\tilde P_1 \tilde P_2} - \overrightarrow{P_1 P_2} = \overrightarrow{Q_1 Q_2}  + R(\bj_1 -\bj_2) + \alpha N_1, 
\]
for some $\alpha \in \R$. Now we bring in the transversality considerations, see \eqref{trans3}, to conclude that 
\[
d R \ges |\overrightarrow{Q_1 Q_2}  + R(\bj_1 -\bj_2) + \alpha N_1| \ges R |\bj_1-\bj_2|;
\]
here we use the structure of the lattice $\mathcal{L}$ to infer that $\bj_1-\bj_2 = \alpha_i N_i(\zeta_i) - \alpha_j N_j(\zeta_j)$ for some
$i,j \in \{2,..,k\}$ and some $\alpha_i, \alpha_j \in \R$. 

Thus $d \ges |\bj_1 - \bj_2|$, and, as a consequence, there are about $d^{k-1}$ values of $\bj$ with the property that $d(T_1,S(Q(\bj))) \leq dR$.

\end{proof}

\bibliographystyle{amsplain} \bibliography{HA-refs}

\end{document}